\newtheorem{thm}{Theorem}[section]
\newtheorem{cor}[thm]{Corollary}
\newtheorem{lem}[thm]{Lemma}
\newtheorem{prop}[thm]{Proposition}
\newtheorem{rem}[thm]{Remark}
\numberwithin{equation}{section}
\newcommand{\rd}{{\rm d}}
\newcommand{\vep}{\varepsilon}
\newcommand{\dd}{\delta}
\def\supp{\mathrm{supp}\,} 
\newcommand{\RR}{\mathbb{R}}
\newcommand{\NN}{\mathbb{N}}
\newcommand{\ve}{\varepsilon}
\def\dist{\mathrm{dist}} 
\begin{document}

\title[Convergence in relative error for the PME in a tube]{Convergence in relative error for the Porous Medium equation in a tube}

\author[A.~Audrito]{Alessandro Audrito}
\address[A.~Audrito]{Department of Mathematics, ETH Zurich, R\"amistrasse 101, 8092 Zurich, Switzerland.
}
\email[]{alessandro.audrito@math.ethz.ch}

\author[A.~Gárriz]{Alejandro Gárriz}
\address[A.~Gárriz]{Institut de Mathématiques de Toulouse. 1R3, Université Paul Sabatier, 118 Rte de Narbonne, 31400 Toulouse, France}
\email[]{alejandro.garriz@math.univ-toulouse.fr}

\author[F.~Quir\'{o}s]{Fernando Quir\'{o}s}
\address[F.~Quir\'{o}s]{Departamento de Matem\'aticas, Universidad Aut\'onoma de Madrid, 28049 Madrid, Spain\\
and\\
Instituto de Ciencias Matem\'aticas ICMAT (CSIC-UAM-UCM-UC3M), 28049 Madrid, Spain}
\email[]{fernando.quiros@uam.es}

\date{}


\keywords{Porous medium diffusion in tubes, long-time behaviour, convergence in relative error, traveling waves.}
\subjclass[2010]{
35K57,  
35K65, 
35C07,  	
35K55. 
}

\maketitle

\begin{abstract}
Given a bounded domain $D \subset \RR^N$ and $m > 1$, we study the long-time behaviour of solutions to the Porous Medium equation (PME) posed in a tube
\[
\partial_tu = \Delta u^m \quad \text{ in } D \times \RR, \quad t > 0,
\]
with homogeneous Dirichlet boundary conditions on the boundary $\partial D \times \RR$ and suitable initial datum at $t=0$. In two previous works, V\'azquez and Gilding \& Goncerzewicz proved that a wide class of solutions exhibit a traveling wave behaviour, when computed at a logarithmic time-scale and suitably renormalized. In this paper, we show that, for large times, solutions converge in \emph{relative error} to the Friendly Giant, i.e., the unique nonnegative solution to the PME posed in the section $D$ of the tube  (with homogeneous Dirichlet boundary conditions) having a special self-similar form. In addition, \emph{sharp} rates of convergence and \emph{uniform} bounds for the location of the \emph{free boundary} of solutions are given.
\end{abstract}


%
%
%
%
%
%
%
%
%
%
\section{Introduction}\label{SectionIntroduction}
We investigate the long-time behaviour of nonnegative solutions to the Cauchy-Dirichlet problem for the Porous Medium equation (PME)
\begin{equation}\label{eq:PLETUBULARDOMAIN}
\begin{cases}
\partial_tu = \Delta u^m \quad &\text{ in } \Omega \times (0,\infty), \\
u = 0 \quad &\text{ in } \partial\Omega \times (0,\infty), \\
u(x,0) = u_0(x) \quad &\text{ in } \Omega,
\end{cases}
\end{equation}
posed in a \emph{tubular} domain $\Omega \subset \RR^{N+1}$:
\begin{equation*}\label{eq:TUBULARDOMAINASSUMPTION}
\Omega = D \times \RR,
\end{equation*}
where $D \subset \RR^N$ is a bounded domain with smooth boundary, which is assumed without loss of generality to contain the origin $O$, and the diffusion parameter satisfies $m > 1$ (degenerate diffusion framework). The initial data are assumed to be nontrivial and to satisfy
\begin{equation}\label{eq:AssInitialData}
u_0 \geq 0 \quad \text{in } \Omega, \qquad u_0 \in C_0(\overline{\Omega}), \qquad u_0^m \in C^1(\overline{\Omega}).
\end{equation}
The most peculiar aspect of this setting is the \emph{cylindrical} shape of the spatial domain $\Omega$, that we call \emph{tube}. When the PME equation is posed either in a bounded domain or in the whole space, the long-time behaviour of nonnegative solutions has been widely investigated and the theory has been fully developed, see for instance \cite{AronsonPeletier1985:art,Vazquez2002:art,Vazquez2004:art,V2:book,S-V1:art}. In both frameworks, if the initial data belong to a reasonable class, the corresponding solutions converge to some \emph{special solutions} to the PME for large times: the \emph{Friendly Giant} in the bounded domain setting (see \eqref{eq:FriendlyGiant} and \cite{AronsonPeletier1985:art,Vazquez2004:art,S-V1:art}), while to the \emph{Barenblatt solution} when the equation is posed in the whole $\RR^N$ (see \cite{Vazquez2002:art,V2:book}).

Even though the tubular setting is intermediate between the above two and presents significant novelties in the description of the long-time behaviour, it  has been less studied: to the best of our knowledge, the only papers on the topic are \cite{Vazquez2004:art,Vazquez2007:art} by V\'azquez and \cite{GidingGoncerzewicz2016:art} by Gilding and Goncerzewicz (see also \cite{AA-JLV3:art} in the $p$-Laplacian diffusion setting). Nonnegative solutions to \eqref{eq:PLETUBULARDOMAIN} exhibit a \emph{traveling wave} (TW) behaviour as $t \to +\infty$, when computed at the correct rescaled variables (see \cite[Theorems 4.1 and 4.2]{GidingGoncerzewicz2016:art}, our main Theorem \ref{thm:LongTimeBeh} and the change of variables \eqref{eq:SECONDRENORMALIZATIONFORMULA}). In this paper we establish new results in this direction and we refine some of the previous ones. As we will explain later, our main theorem is \emph{sharp}.

TW fronts arise frequently in the study of reaction-diffusion equations (see for instance \cite{Fisher:art,K-P-P:art,Aro-Wein1:art,B2:art} and \cite{DePablo-Vazquez1:art,DP-S:art,Biro2002:art,GKbook,AA-JLV:art,AA-JLV2:art,AA2017:art,Du-Quiros-Zhou:art,Garriz2018:art,Du-Garriz-Quiros:preprint} for nonlinear diffusion models) and are not expected in purely diffusive models. The reason why TW solutions appear in our context is the geometry of the domain: when $\Omega$ is a tube, solutions spread along the longitudinal direction, with a loss of mass through the fixed boundary $\partial \Omega$ which, in turn, is compensated by a linear reaction term appearing in the rescaled problem. As a consequence, the asymptotic behaviour for $t \to +\infty$ is in fact a sort of combination of two modes: in the middle of the tube solutions behave like the \emph{Friendly Giant} corresponding to the section $D \subset \RR^N$, while far away they look like a TW and their front moves with constant speed for large times.

Before entering into details, we recall that \emph{degenerate nonlinear} diffusion plays a crucial role in the entire theory (as explained in \cite[Section 7]{Vazquez2007:art} solutions in the range $m \leq 1$ behave quite different). In particular, when $m > 1$ solutions $u$ to \eqref{eq:PLETUBULARDOMAIN} (with compactly supported initial data) have a \emph{free boundary}, i.e., for every $t > 0$, $u(\cdot,t)$ is identically zero outside a set having compact closure in $\overline{\Omega}$ and the set $\{u(\cdot,t) > 0\}$ covers eventually the whole $\Omega$ when $t \to +\infty$. The study of the geometry of such set and its boundary is one of the most interesting (and complicated!) issues of the theory. As proved in \cite{Vazquez2007:art,GidingGoncerzewicz2016:art}, when $t$ is large enough, the \emph{free boundary} of solutions to \eqref{eq:PLETUBULARDOMAIN} is made of two sets described by two bounded and continuous functions $y = \Gamma_u^\pm(z,t)$ (we set $x=(z,y)$ with $z\in D$ and $y\in\RR$ for points $x\in \Omega$), moving towards both ends of the tube with logarithmic law:
\[
\Gamma_u^\pm(z,t) \sim \pm c_{\ast} \ln t \quad \text{ for } t \sim +\infty,
\]
where $c_{\ast} = c_{\ast}(m,D) > 0$ is the critical wave speed (cf. \cite[Theorem 3.1]{Vazquez2007:art} and/or Theorem \ref{thm:ExTWsVaz}). As mentioned above, this pattern strongly deviates from the asymptotics of the PME posed either in bounded domains or the whole space. In our main theorem, we will give a new proof of the logarithmic law for the \emph{free boundary}.

In the remaining part of the introduction we recall the most important results of \cite{Vazquez2004:art,Vazquez2007:art} and \cite{GidingGoncerzewicz2016:art} and we state our main theorem. According to the main ideas of \cite{Vazquez2007:art}, it is convenient to rescale and renormalize the solution $u$ to \eqref{eq:PLETUBULARDOMAIN} as explained in the following paragraph.

\subsection*{The transformed problem.} As just mentioned, the long-time behaviour can be equivalently described in terms of the rescaled and renormalized solution
\begin{equation}\label{eq:SECONDRENORMALIZATIONFORMULA}
v(x,\tau) = (t+t_0)^{\frac{1}{m-1}}u(x,t), \qquad \tau = \ln (t+t_0),
\end{equation}
where $u$ is a weak solution to problem \eqref{eq:PLETUBULARDOMAIN} and $t_0 > 0$ is fixed. At the initial time $t=0$, we have
\begin{equation}\label{eq:InDataTrasProb}
v(x,\tau_0) = t_0^{\frac{1}{m-1}}u_0(x) := v_0(x), \qquad \tau_0 := \ln t_0.
\end{equation}
It is not difficult to check that $v$ is a weak solution to the \emph{reaction-diffusion} problem
\begin{equation}\label{eq:REACTIONTRANSFORMATION}
\begin{cases}
\partial_{\tau} v = \Delta v^m + \frac{v}{m-1}   \quad &\text{ in } \Omega \times (\tau_0,\infty), \\
v = 0                                            \quad &\text{ in } \partial\Omega \times (\tau_0,\infty), \\
v(x,\tau_0) = v_0(x)                             \quad &\text{ in } \Omega,
\end{cases}
\end{equation}
and thus problems \eqref{eq:PLETUBULARDOMAIN} and \eqref{eq:REACTIONTRANSFORMATION} are equivalent, and we can pass from one to the other by means of the change of variables \eqref{eq:SECONDRENORMALIZATIONFORMULA}. Notice that, when $t_0 = 0$, $v$ is defined for all times $\tau \in \RR$ and satisfies
\begin{equation}\label{eq:REACTIONTRANSFORMATIONEternal}
\begin{cases}
\partial_{\tau} v = \Delta v^m + \frac{v}{m-1}   \quad &\text{ in } \Omega \times \RR, \\
v = 0                                            \quad &\text{ in } \partial\Omega \times \RR. \\
\end{cases}
\end{equation}
Both problems play a role in the analysis: the first is crucial in order not to loose the initial condition, while the second is used to construct the special wave solutions (see \cite{Vazquez2007:art}).
\subsection*{The work of Aronson and Peletier.} As anticipated in the above paragraphs, the long-time behaviour of solutions to \eqref{eq:PLETUBULARDOMAIN} has strong connections with the PME posed in bounded domains, with homogeneous Dirichlet boundary conditions, namely
\begin{equation}\label{eq:PLEBOUNDEDDOMAIN}
\begin{cases}
\partial_tu = \Delta u^m \quad &\text{ in } D \times (0,\infty), \\
u = 0 \quad &\text{ in } \partial D \times (0,\infty), \\
u(z,0) = u_0(z) \quad &\text{ in } D,
\end{cases}
\end{equation}
where $D \subset \RR^N$ is a bounded domain with smooth boundary and $u_0$ belongs to a suitable class of nontrivial initial data. Rescaling as in \eqref{eq:SECONDRENORMALIZATIONFORMULA}--\eqref{eq:InDataTrasProb}, we obtain a solution $v$ of \eqref{eq:REACTIONTRANSFORMATION} posed in $D$
\begin{equation}\label{eq:REACTIONTRANSFORMATIONBoundD}
\begin{cases}
\partial_{\tau} v = \Delta v^m + \frac{v}{m-1}   \quad &\text{ in } D \times (\tau_0,\infty), \\
v = 0                                            \quad &\text{ in } \partial D \times (\tau_0,\infty), \\
v(x,\tau_0) = v_0(x)                             \quad &\text{ in } D.
\end{cases}
\end{equation}
We rephrase below the main result of the seminal paper \cite{AronsonPeletier1985:art} by Aronson and Peletier.
\begin{thm}(\cite[Proposition 1, Theorems 1 and 2]{AronsonPeletier1985:art})\label{ASYMPTOTICSINBOUNDEDDOMAINS}
Let $v = v(z,\tau)$ be the nonnegative weak solution to problem \eqref{eq:REACTIONTRANSFORMATIONBoundD} with nontrivial and nonnegative initial data $v_0$, satisfying $v_0 \in C_0(\overline{D})$, $v_0^m \in C^1(\overline{D})$. Then
\begin{equation}\label{eq:RelErrVBdd}
\lim_{\tau \to \infty} \sup_{z \in D}  \;\left| \frac{v(z,\tau)}{\Phi(z)} - 1 \right| = 0,
\end{equation}
where $\Phi = \Phi(z)$ is the unique nonnegative (and nontrivial) weak solution to the stationary problem
\begin{equation}\label{eq:STATIONARYPROBLEM}
\begin{cases}
-\Delta \Phi^m = \tfrac{1}{m-1}\Phi \quad &\text{ in } D, \\
\Phi = 0              \quad &\text{ in } \partial D.
\end{cases}
\end{equation}
Furthermore,
\begin{equation}\label{eq:SSPosHopf}
\Phi \in C^\infty(D) \cap C(\overline{D}),  \qquad \quad    \Phi > 0 \quad \text{in } D,  \qquad \quad  \frac{\partial \Phi}{\partial \nu} < 0 \quad \text{in } \partial D,
\end{equation}
where $\nu = \nu(z)$ denotes the outward unit normal to $\partial D$ at $z$.
\end{thm}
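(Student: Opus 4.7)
The plan is to split the argument into three parts: existence, uniqueness and regularity of the stationary profile $\Phi$; an eventual two-sided barrier of the form $\underline\lambda\,\Phi \le v(\cdot,\tau) \le \overline\lambda\,\Phi$; and a monotonicity argument forcing $v/\Phi\to 1$.

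For the stationary problem \eqref{eq:STATIONARYPROBLEM}, I would set $w=\Phi^m$, turning it into the semilinear problem $-\Delta w=w^{1/m}/(m-1)$, $w=0$ on $\partial D$. This is the Euler--Lagrange equation of
\[
J[w]=\tfrac12\int_D|\nabla w|^2\,dz-\tfrac{m}{(m-1)(m+1)}\int_D w_+^{(m+1)/m}\,dz
\]
on $H^1_0(D)$; since $(m+1)/m<2$, the functional $J$ is coercive and weakly lower semicontinuous, so it admits a nontrivial nonnegative minimizer. Elliptic bootstrapping then gives $w\in C^\infty(D)\cap C(\overline D)$, whence $\Phi=w^{1/m}$ inherits the regularity in \eqref{eq:SSPosHopf}. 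Positivity in $D$ and the Hopf condition $\partial\Phi/\partial\nu<0$ follow from the strong maximum principle and Hopf's lemma applied to $w$. Uniqueness follows from the classical concavity trick based on the strict concavity of $s\mapsto s^{1/m}$ for $m>1$: testing the equation for $\Phi_1^m-\Phi_2^m$ against a suitable combination of $\Phi_1^m$ and $\Phi_2^m$ forces $\Phi_1\equiv\Phi_2$.

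The convergence \eqref{eq:RelErrVBdd} hinges on the identity obtained by plugging $\lambda\Phi$ into the rescaled PDE and using $-\Delta\Phi^m=\Phi/(m-1)$:
\[
\partial_\tau(\lambda\Phi)-\Delta(\lambda\Phi)^m-\tfrac{\lambda\Phi}{m-1}=\tfrac{\lambda^m-\lambda}{m-1}\,\Phi,
\]
so $\lambda\Phi$ is a supersolution when $\lambda\ge 1$ and a subsolution when $\lambda\in(0,1]$. The first step is to pinch $v(\cdot,\tau_1)$ between $\underline\lambda\Phi$ and $\overline\lambda\Phi$ for some $\tau_1>\tau_0$: the upper side follows from the $L^\infty$-bound on $v(\cdot,\tau_1)$ (standard smoothing effect) together with $\partial_\nu\Phi<0$, which lets a large $\overline\lambda\Phi$ majorise $v$; the lower side requires a positivity estimate obtained from a parabolic Harnack or Aronson--Caffarelli type bound (available because the reaction term $v/(m-1)$ prevents $v$ from becoming extinct), combined with boundary regularity of $v^m$ up to $\partial D$. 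Once pinched, I define
\[
\overline\lambda(\tau):=\sup_{z\in D}\frac{v(z,\tau)}{\Phi(z)},\qquad \underline\lambda(\tau):=\inf_{z\in D}\frac{v(z,\tau)}{\Phi(z)},
\]
and comparison with $\overline\lambda(\tau_1)\Phi$ and $\underline\lambda(\tau_1)\Phi$ shows that $\overline\lambda$ is nonincreasing and $\underline\lambda$ is nondecreasing, so both admit limits $\overline\lambda_\infty\ge 1\ge\underline\lambda_\infty>0$. A standard parabolic compactness argument on the positivity set, applied to the translates $v(\cdot,\tau_n+s)$, extracts a limit $V$ solving the rescaled PDE globally in time and satisfying $\underline\lambda_\infty\Phi\le V\le\overline\lambda_\infty\Phi$ with the extrema attained; the barrier identity is strict whenever $\lambda\neq 1$, so the strong maximum principle forces $V\equiv\Phi$, giving $\overline\lambda_\infty=\underline\lambda_\infty=1$.

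The most delicate step is the lower pinching $\underline\lambda\Phi\le v(\cdot,\tau_1)$ \emph{all the way up to} $\partial D$: since $v^m$ satisfies a degenerate parabolic equation, $v$ could a priori vanish along $\partial D$ in a way making the ratio $v/\Phi$ collapse to zero at the boundary. Controlling this requires a boundary Hopf-type estimate for $v^m$ ensuring that $v$ detaches linearly from $\partial D$ at the same rate as $\Phi$, which in \cite{AronsonPeletier1985:art} is obtained by constructing explicit sub-barriers adapted to the geometry of $D$ near $\partial D$. This is the technical heart of the argument and the place where no soft compactness/maximum-principle shortcut seems available.
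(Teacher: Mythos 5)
A preliminary remark: the paper does not prove Theorem \ref{ASYMPTOTICSINBOUNDEDDOMAINS} at all — it is quoted from Aronson--Peletier — so the only thing to compare you with is the proof of Proposition \ref{LEMMACONVERGENCEINBOUDEDDOMAINS}, which redeploys the same machinery in the tube. Your reconstruction has the right skeleton: the reduction $w=\Phi^m$ to a sublinear semilinear problem (variational existence, bootstrap regularity, Brezis--Oswald-type uniqueness), a two-sided pinching of $v(\cdot,\tau_1)$ by multiples of $\Phi$, and the correct identification of the boundary lower bound (obtained in \cite{AronsonPeletier1985:art} via the torsion-function barrier $bH\le u^m(\cdot,T_*)$, $-\Delta H=1$) as the technical heart.

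Two steps are weaker than they need to be. First, for the upper pinching an $L^\infty$ bound on $v(\cdot,\tau_1)$ is not enough: since $\Phi$ vanishes on $\partial D$, to get $v(\cdot,\tau_1)\le\overline\lambda\,\Phi$ you need $v^m(\cdot,\tau_1)\lesssim\dist(\cdot,\partial D)$, i.e.\ Lipschitz control of $v^m$ up to the boundary, matched against the Hopf property of $\Phi^m$ (it is $\partial_\nu\Phi^m<0$, not $\partial_\nu\Phi<0$, that is quantitatively useful, since $\Phi\sim\dist^{1/m}$); this is precisely why the hypothesis $v_0^m\in C^1(\overline D)$ appears, and it is how Step 1 of the proof of Proposition \ref{LEMMACONVERGENCEINBOUDEDDOMAINS} proceeds. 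Second, and more seriously, your final step (monotone quantities $\overline\lambda(\tau),\underline\lambda(\tau)$, an omega-limit $V$, strong maximum principle) is both heavier than the classical argument and has a gap: the extremum of $V/\Phi$ may only be approached along $\partial D$, where $V$ and $\overline\lambda_\infty\Phi$ both vanish and the interior strong maximum principle gives nothing; one would need an extra boundary Hopf-type argument for the degenerate equation. The classical route avoids this entirely by upgrading your static barriers $\lambda\Phi$ to the exact separated solutions $\lambda(\tau)\Phi$ with $\lambda'=\tfrac{\lambda-\lambda^m}{m-1}$, i.e.\ $\lambda(\tau)=(1+Ce^{-\tau})^{-1/(m-1)}$ — in the original variables, the time-shifted Friendly Giants $U(z,t+t_1)$. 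Once $v(\cdot,\tau_1)$ is pinched between two such solutions, comparison gives $\lambda_-(\tau)\Phi\le v(\cdot,\tau)\le\lambda_+(\tau)\Phi$ with $\lambda_\pm(\tau)\to1$, and \eqref{eq:RelErrVBdd} follows with a rate $O(e^{-\tau})=O(1/t)$ and no compactness argument at all.
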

Notice that, in terms of the solution $u$ to \eqref{eq:PLEBOUNDEDDOMAIN} with nontrivial and nonnegative initial data $u_0$ satisfying $u_0 \in C_0(\overline{D})$, $v_0^m \in C^1(\overline{D})$, the above statement asserts that
\begin{equation}\label{eq:RelErrUBdd}
\lim_{t \to \infty} \sup_{z \in D}  \;\left| \frac{u(z,t)}{U(z,t)} - 1 \right| = 0,
\end{equation}
where $U = U(z,t)$ is the unique weak solution to
\begin{equation}\label{eq:PLEBOUNDEDDOMAINSEPVARIABLE}
\begin{cases}
\partial_tU = \Delta U^m \quad &\text{ in } D \times (0,\infty), \\
U = 0 \quad &\text{ in } \partial D \times (0,\infty),
\end{cases}
\end{equation}
in self-similar form
\begin{equation}\label{eq:FriendlyGiant}
U(z,t) = t^{-\frac{1}{m-1}} \,\Phi(z),
\end{equation}
called the \emph{Friendly Giant}\footnote{This name comes from the fact that $U$ is a global upper bound for solutions to \eqref{eq:PLEBOUNDEDDOMAIN}, since $U(\cdot,t) \to +\infty$ in $D$ as $t \to 0^+$.}. The limits in \eqref{eq:RelErrVBdd} and \eqref{eq:RelErrUBdd} show that $u$ converges to $U$ in \emph{relative error} in $D$ for large times: this means that not only $u(\cdot,t) \to U(\cdot,t)$ uniformly in $\overline{D}$ as $t \to +\infty$, but also that $u$ and $U$ are \emph{comparable} up to the boundary $\partial D$, a highly nontrivial information since both $u$ and $U$ are zero in $\partial D$.

Keeping this fact in mind, we proceed by presenting the main result of V\'azquez, about the existence of a wave solution in the tubular domain framework.
\subsection*{The work of V\'azquez.} The starting point of our study is the paper \cite{Vazquez2007:art} (see also \cite{Vazquez2004:art}), where V\'azquez proved the existence of a wave solution to problem \eqref{eq:REACTIONTRANSFORMATION} and established a first result concerning the asymptotic behaviour for large times of solutions to \eqref{eq:PLETUBULARDOMAIN}. We rephrase below the main result in \cite{Vazquez2004:art,Vazquez2007:art}:
\begin{thm}\label{thm:ExTWsVaz} (\cite[Theorems 3.1 and 5.4]{Vazquez2007:art} and \cite[Theorem 3.2]{Vazquez2004:art})
There exists a unique $c_{\ast} > 0$ depending only on $m$ and $D$ such that problem \eqref{eq:REACTIONTRANSFORMATIONEternal} has a continuous\footnote{Actually, $\varphi$ is $C^\alpha(\overline{\Omega})$ for some $\alpha \in (0,1)$. See for instance \cite{AronsonPeletier1985:art,V2:book}.} nonnegative weak solution with wave form
\begin{equation}\label{eq:AnsatzTW}
\tilde{v}(z,y,\tau) = \varphi(z,y - c_{\ast}\tau),
\end{equation}
and the wave profile $\varphi = \varphi(z,\xi)$ satisfies
\begin{equation}\label{eq:Propphi}
\partial_\xi \varphi \leq 0,   \qquad    \lim_{\xi \to -\infty} \, \sup_{z\in D} \, \frac{\varphi(z,\xi)}{\Phi(z)} = 1,   \qquad   \varphi(z,\xi) = 0 \quad \text{for all }z\in D,\ \xi \geq \xi_0,
\end{equation}
for some $\xi_0 \in \RR$ (depending only on $m$ and $D$), where $\Phi = \Phi(z)$ is the unique nonnegative weak solution to problem \eqref{eq:STATIONARYPROBLEM}. Furthermore, for every compact set $\Omega' \subset \overline{\Omega}$, the solution $v$ to \eqref{eq:InDataTrasProb}--\eqref{eq:REACTIONTRANSFORMATION} satisfies
\[
\lim_{\tau \to +\infty} \sup_{(z,y) \in \Omega'} |v(z,y,\tau) - \Phi(z)| = 0,
\]
that is, in compact subsets of the tube, $v$ forgets about the longitudinal direction and behaves (in absolute value) as the solution to the PME posed in $D$.
\end{thm}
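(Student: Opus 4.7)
The plan is to tackle the two parts separately: first the traveling-wave part, then the compact-set convergence. Inserting \eqref{eq:AnsatzTW} into \eqref{eq:REACTIONTRANSFORMATIONEternal} reduces the problem to the semilinear elliptic equation
\begin{equation*}
\Delta_{z,\xi}\varphi^m + c\,\partial_\xi\varphi + \tfrac{1}{m-1}\,\varphi = 0 \quad \text{in } D\times\RR, \qquad \varphi = 0 \text{ on } \partial D\times\RR.
\end{equation*}
For each candidate speed $c>0$, I would construct a profile $\varphi_c$ by monotone iteration on truncated cylinders $D\times(-L,L)$ with boundary data $\Phi(z)$ at $\xi=-L$ and $0$ at $\xi=L$, then pass to the limit $L\to\infty$ using a supersolution built from $\Phi$ and standard parabolic regularity. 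The monotonicity $\partial_\xi\varphi_c\leq 0$ would be obtained via a sliding argument in the spirit of Berestycki--Nirenberg, and an $\xi$-independent upper barrier would confine the positivity set to $\{\xi\leq \xi_0(c)\}$ for some $\xi_0(c)\in (-\infty,+\infty]$.

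\textbf{Critical speed and asymptotic profile.} To isolate $c_*$, I would study how $\xi_0(c)$ depends on $c$ via an integral (Pohozaev-type) identity obtained by multiplying the profile equation by well-chosen test functions and integrating over the cross-section; this should show that $\xi_0(c)\to -\infty$ as $c\to\infty$ and $\xi_0(c)=+\infty$ for $c$ small, with $c_*$ the threshold at which the free boundary first becomes finite. Once a compactly supported $\varphi=\varphi_{c_*}$ is produced, the decreasing bounded family $\{\varphi(\cdot,\xi-h)\}_{h>0}$ is monotone in $h$ and hence converges as $h\to\infty$ to a nontrivial nonnegative weak solution of \eqref{eq:STATIONARYPROBLEM}, which by the uniqueness part of Theorem \ref{ASYMPTOTICSINBOUNDEDDOMAINS} must coincide with $\Phi$. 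The strict positivity and Hopf bound in \eqref{eq:SSPosHopf} then upgrade the pointwise convergence to the uniform-ratio form in \eqref{eq:Propphi}.

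\textbf{Compact-set convergence.} The function $U(z,y,t)=t^{-1/(m-1)}\Phi(z)$ is a bona fide solution of the PME on the whole tube $\Omega$: it is independent of $y$, and $\Phi$ satisfies \eqref{eq:STATIONARYPROBLEM}. Being infinite at $t=0^+$, it dominates any bounded initial datum $u_0$ by comparison, and rescaling through \eqref{eq:SECONDRENORMALIZATIONFORMULA} yields the upper bound $v(z,y,\tau)\leq (1+o(1))\,\Phi(z)$ as $\tau\to\infty$. For the matching lower bound, I would choose $\tau_1$ large enough that $v(\cdot,\tau_1)$ is strictly positive on a prescribed compact set of $D\times[-R,R]$ (by finite-speed propagation and strict positivity of classical PME solutions), and then pick a shift $A>0$ so that
\begin{equation*}
v(z,y,\tau_1)\geq \varphi(z,\pm y - c_*\tau_1 - A) \quad \text{in } \Omega,
\end{equation*}
which is feasible because each right-hand side can be made arbitrarily small uniformly in $(z,y)$ and vanishes for $\pm y$ large. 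The comparison principle on $\Omega\times(\tau_1,\infty)$ gives $v(z,y,\tau)\geq \varphi(z,y-c_*\tau-A)$ and its reflection for $\tau\geq \tau_1$; on any compact $\Omega'\subset \overline\Omega$ the argument $\xi=y-c_*\tau-A$ tends to $-\infty$ uniformly, so \eqref{eq:Propphi} forces the lower bound to converge to $\Phi(z)$ uniformly on $\Omega'$, closing the sandwich.

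\textbf{Main obstacle.} The genuinely delicate step is the TW part, specifically the existence of a \emph{unique} critical speed $c_*$ together with the identification $\varphi(\cdot,-\infty)=\Phi$ in relative error. Since the profile equation is a truly $(N{+}1)$-dimensional degenerate semilinear elliptic problem with an unknown free boundary along $\{\xi=\xi_0\}$, one-dimensional phase-plane methods are unavailable; instead one has to coordinate the sliding method, Hopf-type estimates along $\partial D\times\RR$, and the uniqueness theory for \eqref{eq:STATIONARYPROBLEM} to pin down $c_*$, the strict monotonicity of $\varphi$ in $\xi$, and its uniform convergence to $\Phi$ at $-\infty$.
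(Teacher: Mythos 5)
First, a point of order: the paper does not prove Theorem \ref{thm:ExTWsVaz} at all --- it is quoted verbatim from V\'azquez \cite[Theorems 3.1 and 5.4]{Vazquez2007:art} and \cite[Theorem 3.2]{Vazquez2004:art} as background, so there is no internal proof to compare against. Judged on its own merits, your sketch of the wave construction is a reasonable outline (truncated cylinders, sliding, limit at $\xi\to-\infty$ identified with $\Phi$ by uniqueness), but the step that actually pins down $c_\ast$ is left essentially unargued: the proposed ``Pohozaev-type identity'' showing $\xi_0(c)$ crosses from $+\infty$ to finite is speculative, and it is not how the critical speed is identified in the literature (V\'azquez constructs the wave as a long-time limit of a parabolic evolution, and Gilding--Goncerzewicz characterize $c_\ast=1/((m-1)\sqrt{\lambda_1(D)})$ through an invariance principle, cf. Theorem \ref{thm:GildGoncRecap}(i)). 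As written, this part is a plan rather than a proof.

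The compact-set convergence argument contains a concrete error. You claim one can choose a shift $A$ so that $v(z,y,\tau_1)\geq\varphi(z,y-c_\ast\tau_1-A)$ holds \emph{in all of $\Omega$}, ``because the right-hand side can be made arbitrarily small uniformly in $(z,y)$.'' This is false: for every fixed shift $s$ one has $\varphi(z,y-s)\to\Phi(z)>0$ as $y\to-\infty$, so $\sup_{y}\varphi(z,y-s)=\Phi(z)$ independently of $s$, while $v(\cdot,\tau_1)$ vanishes for $|y|$ large. Hence the initial ordering on the whole tube is impossible. If instead you compare on the half-tube $D\times[0,\infty)$, where the wave's supremum can indeed be made small, you acquire a lateral boundary $D\times\{y=0\}\times(\tau_1,\infty)$ on which the subsolution's trace $\varphi(z,-c_\ast\tau-A)$ increases to $\Phi(z)$ as $\tau\to\infty$ --- a value $v(z,0,\tau)$ never exceeds, so the comparison cannot be closed with the unperturbed wave. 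This is precisely the obstruction the paper's own machinery is built to overcome: either by dilating $D$ and damping the wave by a factor $1-a_\vep$ (Section \ref{Section:ProofMainTheorem1}), or by the time-modulated barrier $f(\tau)\varphi(z,y-g(\tau))$ with $f<1$ together with the exponential rate of Corollary \ref{cor:speed of convergence} (Section \ref{Section:ProofMainTheorem1bis}). For the mere absolute-error statement you are proving here, a correct and simpler route avoids the wave entirely: compare $v$ from below with the solutions of the Dirichlet problem in expanding bounded cylinders $D\times(-L,L)$, whose Aronson--Peletier limits converge to $\Phi$ locally uniformly as $L\to\infty$ (this is close to the separable subsolution $\Phi(z)[\lambda\cos(\alpha y)]^{1/m}$ used in Proposition \ref{LEMMACONVERGENCEINBOUDEDDOMAINS}).
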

Some comments are now in order. First, notice that plugging the ansatz \eqref{eq:AnsatzTW} into \eqref{eq:REACTIONTRANSFORMATIONEternal}, we easily see that the pair $(c_\ast,\varphi)$ is a weak solution to the elliptic problem
\begin{equation}\label{eq:WaveEqcast}
\begin{cases}
\Delta \varphi^m + c_\ast\partial_\xi \varphi + \frac{\varphi}{m-1} = 0  \quad &\text{ in } \Omega, \\
\varphi = 0 \quad &\text{ in } \partial\Omega.
\end{cases}
\end{equation}
It thus follows that $\varphi$ is invariant under shifts along the longitudinal coordinate. Further, the reflection $\xi \to -\xi$ gives a profile $\tilde{\varphi}$ with speed $-c_\ast$, that is, the new profile travels towards the bottom end of the tube, and satisfies
\begin{equation}\label{eq:PropPhiReflected}
\partial_\xi \tilde{\varphi} \geq 0,   \qquad    \lim_{\xi \to +\infty} \, \sup_{z\in D} \, \frac{\tilde{\varphi}(z,\xi)}{\Phi(z)} = 1,   \qquad   \tilde{\varphi}(z,\xi) = 0 \quad \text{for all }z\in D,\ \xi \leq -\xi_0.
\end{equation}
Second, by the last relation in \eqref{eq:Propphi}, we have that the support of $\varphi$ is contained in $D\times(-\infty,\xi_0] \subset \Omega$ and so $\varphi$ has a free boundary, which is a locally H\"older hypersurface in $\Omega$ far from $\partial\Omega$ (see for instance \cite{Caff-Fried:art,Caf1980:art,CaffVazWol1987:art,Koch1990:art,KienzlerVazquezKoch2018:art}). It can be parametrized setting
\begin{equation}\label{eq:DefFBTW}
\xi = \Gamma_\varphi^+(z) := \inf\{\eta: \varphi(z,\eta) = 0\}, \quad z \in D.
\end{equation}
By \cite[Proposition 4.1]{Vazquez2007:art}, it turns out that the function $\Gamma_\varphi^+$ is bounded and continuous in $D$. Finally, notice that coming back to the usual variables, we obtain a family of special solutions to the PME (posed in $\Omega$ with homogeneous Dirichlet boundary conditions) defined by
\[
\psi_\ell(z,y,t) := t^{-\frac{1}{m-1}}\varphi(z, y - c_\ast \ln t - \ell), \qquad \ell \in \RR, \; t > 0.
\]
\subsection*{The work of Gilding and Goncerzewicz.} Later, in \cite{GidingGoncerzewicz2016:art}, Gilding and Goncerzewicz investigated the long-time behaviour of solutions to \eqref{eq:PLETUBULARDOMAIN} and further properties of the wave solutions introduced above, such as uniqueness and stability. Their arguments are based on a powerful \emph{invariance principle} (see \cite[Theorem 3.1]{GidingGoncerzewicz2016:art}) that allows to characterize the critical speed of propagation. Exploiting the same invariance principle, Gilding and Goncerzewicz showed that, in an appropriate scale, the solution to \eqref{eq:PLETUBULARDOMAIN} converges to a suitable displacement (in dependence of the initial data) of the wave solution \eqref{eq:AnsatzTW} as $t \to +\infty$. We present below the main results in \cite{GidingGoncerzewicz2016:art}, rephrasing them coherently with our notations.
\begin{thm}(\cite[Section 4]{GidingGoncerzewicz2016:art})\label{thm:GildGoncRecap} Let $v$ be the solution to \eqref{eq:InDataTrasProb}--\eqref{eq:REACTIONTRANSFORMATION} with initial datum satisfying \eqref{eq:AssInitialData} and let $(c_\ast,\varphi)$ the wave solution satisfying \eqref{eq:Propphi}--\eqref{eq:WaveEqcast}. Then the following assertions hold true:

\noindent{\rm (i)} Up to longitudinal shifts, the pair $(c_\ast,\varphi)$ is the unique solution to problem \eqref{eq:WaveEqcast} satisfying \eqref{eq:Propphi} and, furthermore,
\[
c_\ast = \frac{1}{(m-1)\sqrt{\lambda_1(D)}},
\]
where $\lambda_1(D) > 0$ is the first eigenvalue of the Dirichlet Laplacian in $D$.

\noindent{\rm (ii)} There exists $\ell_0 \in \RR$ depending only on $m$, $D$ and the initial data such that for every $\alpha\in\RR$, there holds
\begin{equation}\label{eq:ConvGilGonc}
\lim_{\tau \to +\infty} \sup_{z \in D, |y| \geq \alpha +  c_\ast\tau } |v(z,y,\tau) - \varphi(z,y - c_\ast\tau -\ell_0)| = 0.
\end{equation}
The shift $\ell_0$ is given explicitly in \cite[Formula (4.1)]{GidingGoncerzewicz2016:art}.

\noindent{\rm (iii)} For every $z \in D$, there holds\footnote{According to \eqref{eq:DefFBTW}, we define $\Gamma_v^+(z,\tau) := \sup\{y \in\RR: v(z,y,\tau) > 0 \}$.}
\[
\liminf_{\tau\to+\infty} \; \Gamma_v^+(z,\tau) - c_\ast\tau  \geq  \Gamma_\varphi^+(z) + \ell_0.
\]

\noindent{\rm (iv)} Finally, there holds
\[
\limsup_{\tau\to+\infty} \; \Gamma_v^+(z,\tau) - c_\ast\tau \leq \widetilde{\Gamma}_\varphi^+(z) + \ell_0,
\]
uniformly w.r.t $z \in D$, where $\widetilde{\Gamma}_\varphi^+$ denotes the concave envelope of $\Gamma_\varphi^+$.
\end{thm}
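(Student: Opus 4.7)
The overall strategy I would pursue centers on an \emph{invariance principle}: testing the reaction--diffusion equation against the first Dirichlet eigenfunction $\phi_1$ of $-\Delta$ on $D$ multiplied by a longitudinal exponential. This single identity will determine $c_\ast$, identify the shift $\ell_0$, and drive the convergence.

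\textbf{Step 1 (Invariance principle; (i) and $\ell_0$).} Let $\phi_1>0$ be normalized with eigenvalue $\lambda_1=\lambda_1(D)$ and set $\mu:=\sqrt{\lambda_1}$. Since $\phi_1$ vanishes on $\partial D$ and $\Delta\bigl(\phi_1(z)e^{\mu y}\bigr)=(-\lambda_1+\mu^2)\phi_1e^{\mu y}=0$, testing \eqref{eq:REACTIONTRANSFORMATIONEternal} against $\phi_1(z)e^{\mu y}$ and integrating by parts twice kills the nonlinear diffusion term, yielding
\[
\frac{d}{d\tau}I(\tau)=\frac{1}{m-1}\,I(\tau), \qquad I(\tau):=\int_\Omega v(z,y,\tau)\,\phi_1(z)\,e^{\mu y}\,dz\,dy,
\]
so $I(\tau)=e^{\tau/(m-1)}I(0)$ whenever the integral converges (e.g., for compactly supported $v_0$). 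Applied to $\tilde v(z,y,\tau)=\varphi(z,y-c_\ast\tau)$, the same quantity grows like $e^{c_\ast\mu\tau}$; matching exponents forces $c_\ast=1/\bigl((m-1)\sqrt{\lambda_1(D)}\bigr)$. Uniqueness of $\varphi$ up to $\xi$-shifts then follows from a standard sliding argument based on $\partial_\xi\varphi\leq 0$, the limits in \eqref{eq:Propphi}, and the strong maximum principle for the linearized operator. Finally, equating $I(0)$ with $I$ evaluated on $\varphi(\cdot,\cdot-\ell_0)$ at $\tau=0$ yields an explicit formula for $\ell_0$.

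\textbf{Step 2 (Convergence, (ii)).} Set $v_n(z,y,\tau):=v(z,y+c_\ast\tau_n,\tau+\tau_n)$ for $\tau_n\to\infty$. Standard PME estimates (energy bounds, local $C^\alpha$ regularity up to $\partial D\times\RR$, finite speed of propagation, and the a priori upper bound $v\leq C\Phi$ supplied by Theorem \ref{thm:ExTWsVaz}) give local equicontinuity of $\{v_n\}$; along a subsequence $v_n\to v_\infty$ locally uniformly, with $v_\infty$ a global-in-time weak solution of \eqref{eq:REACTIONTRANSFORMATIONEternal}. Passing the invariance identity to the limit identifies the $I$-moment of $v_\infty$ with that of $\varphi(\cdot,\cdot-\ell_0)$. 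To upgrade this moment match to pointwise equality, squeeze $v_n$ between two shifted traveling waves with shifts $\ell_0\pm\varepsilon$, built as sub-/supersolutions initially ordering $v$; letting $\varepsilon\to 0$ and invoking Step 1's uniqueness forces $v_\infty(z,y,\tau)=\varphi(z,y-c_\ast\tau-\ell_0)$. The restriction $|y|\geq\alpha+c_\ast\tau$ is natural: in the bulk region $|y|\ll c_\ast\tau$ the stronger asymptotics $v\to\Phi(z)$ of Theorem \ref{thm:ExTWsVaz} apply, consistently with the tail $\xi\to-\infty$ of $\varphi$.

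\textbf{Step 3 (Free-boundary bounds, (iii)--(iv)).} Part (iii) is immediate from (ii): for $\xi<\Gamma_\varphi^+(z)+\ell_0$, $\varphi(z,\xi-\ell_0)>0$, so uniform convergence forces $v(z,c_\ast\tau+\xi,\tau)>0$ for all large $\tau$. For (iv), since the bound must be uniform in $z$ while $\Gamma_\varphi^+$ need not be concave, for each $\varepsilon>0$ I would construct a global supersolution of the form
\[
\overline v_\varepsilon(z,y,\tau):=\inf_{\eta\in S_\varepsilon}\varphi\bigl(z,y-c_\ast\tau-\eta\bigr),
\]
where $S_\varepsilon$ is a finite family of longitudinal shifts chosen so that the right endpoint of $\supp \overline v_\varepsilon(\cdot,\cdot,\tau)$ in $y$ equals $c_\ast\tau+\widetilde\Gamma_\varphi^+(z)+\ell_0+\varepsilon$. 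Each $\varphi(\cdot,\cdot-\eta)$ solves \eqref{eq:REACTIONTRANSFORMATIONEternal}, the infimum of a finite family of nonnegative weak solutions is a viscosity/weak supersolution, and (ii) provides the initial ordering $\overline v_\varepsilon\geq v$ at a large time; the comparison principle and $\varepsilon\to 0$ then deliver the uniform upper bound. The main obstacle is (iv): one must design $S_\varepsilon$ so that the infimum preserves the supersolution property across the nonlinear diffusion (this is where the concavity of $\widetilde\Gamma_\varphi^+$ enters, enabling the envelope to be realized as a minimum over admissible shifts) and so that the initial ordering is attained uniformly in $z\in D$ --- it is precisely this uniformity that turns the pointwise (iii) into the uniform (iv).
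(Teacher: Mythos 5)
First, a point of order: the paper does not prove this theorem. It is stated as a quotation of results from Gilding and Goncerzewicz (\cite[Section 4]{GidingGoncerzewicz2016:art}), so there is no in-paper proof to compare against. That said, your Step 1 is exactly the mechanism the paper attributes to Gilding--Goncerzewicz: the invariance principle obtained by testing against $\phi_1(z)e^{\mu y}$ with $\mu=\sqrt{\lambda_1(D)}$ (the boundary terms vanish because both $v^m$ and $\phi_1$ vanish on $\partial\Omega$), which forces $\mu c_\ast=1/(m-1)$ and yields the explicit $\ell_0$ by matching the conserved moment of $v$ with that of the shifted wave. Steps 2 and 3(iii) are reasonable sketches of a compactness-plus-identification argument and of the deduction of the lower free-boundary bound from (ii), modulo the usual care needed to pass the exponentially weighted integral to the limit and to run a sliding argument across a degenerate free boundary.

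There is, however, a concrete flaw in your Step 3(iv). An infimum over a family $S_\varepsilon$ of purely longitudinal shifts cannot produce the concave envelope of $\Gamma_\varphi^+$: since each translate has support $\{y-c_\ast\tau-\eta<\Gamma_\varphi^+(z)\}$, the support of $\inf_{\eta\in S_\varepsilon}\varphi(z,y-c_\ast\tau-\eta)$ is simply $\{y-c_\ast\tau<\Gamma_\varphi^+(z)+\min S_\varepsilon\}$, i.e.\ the infimum collapses to the left-most translate and its free boundary is again a vertical shift of $\Gamma_\varphi^+$ itself, not of $\widetilde{\Gamma}_\varphi^+$. The tube has no translational or rotational invariance in the $z$-directions, so you cannot generate the envelope by sliding copies of $\varphi$; the concave envelope in Gilding--Goncerzewicz's bound enters through a different (and more delicate) comparison construction, not through an infimum of longitudinal translates. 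As written, your argument for (iv) would at best reprove a non-uniform version of (iii) from above with $\Gamma_\varphi^+$ in place of $\widetilde{\Gamma}_\varphi^+$, which is false in general when $\{\varphi>0\}$ is not convex, so this step needs a genuinely different idea.
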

Also this statement deserves some additional comments. Part (i) completes the wave analysis by showing the uniqueness of the pair $(c_\ast,\varphi)$ and gives an explicit expression for the critical speed. The presence of the first eigenvalue of the Dirichlet Laplacian in $D$ is a consequence of the invariance principle mentioned above (see \cite[Theorem 3.2]{GidingGoncerzewicz2016:art}). From now on, in order to fix the notations, we will implicitly assume that $\varphi$ is the unique solution to \eqref{eq:Propphi}--\eqref{eq:WaveEqcast} satisfying $\sup\{\Gamma_\varphi^+(z) : z \in D\} = 0$.

Part (ii) proves that the solution to \eqref{eq:InDataTrasProb}--\eqref{eq:REACTIONTRANSFORMATION} converges (in absolute error) to the wave solution displaced by a suitable shift $\ell_0$ along the longitudinal direction. Remarkably, such shift is explicit in terms of the initial data (again this is a consequence of the invariance principle), but one has to note too that the convergence in absolute error gives no information about the solution near the boundary of the tube, since both the solution and the limit take the value 0 there. Our result not only provides a different approach to the study of the large time behaviour but also fixes this and, furthermore, provides a speed of convergence in compact sets.


Finally, parts (iii) and (iv) give a quite robust description of the long-time behaviour of the \emph{free boundary} of \eqref{eq:InDataTrasProb}--\eqref{eq:REACTIONTRANSFORMATION} in terms of the \emph{free boundary} of the wave solution. Notice that, if the set of $\{ \varphi > 0 \}$ is convex then (iii) and (iv) imply that $\Gamma_v^+(\cdot,\tau) - c_\ast\tau - \ell_0 \to \Gamma_\varphi^+(\cdot)$ uniformly on compact subsets of $D$, as $\tau \to +\infty$.

\subsection*{Our main result.} We can now state our main result.
\begin{thm}\label{thm:LongTimeBeh}
Let $c_{\ast} > 0$ be as in Theorem \ref{thm:ExTWsVaz} and let $v_0$ be defined as in \eqref{eq:InDataTrasProb}. Then there exists $t_0 > 0$ such that the solution $v$ to \eqref{eq:REACTIONTRANSFORMATION} with initial data $v_0$ satisfies the following two assertions:

\noindent{\rm (i)} For every $c \in (0,c_\ast)$,
\begin{equation}\label{eq:LTAInnerBeh}
\lim_{\tau \to +\infty} \sup_{z \in D, \; |y| \leq c\tau} \bigg|\frac{v(z,y,\tau)}{\Phi(z)} - 1 \bigg| = 0,
\end{equation}
where $\Phi$ is the unique nonnegative and nontrivial weak solution to \eqref{eq:STATIONARYPROBLEM}.

\noindent{\rm (ii)} For every $c > c_{\ast}$, there exists $\tau_c > 0$ such that
\begin{equation}\label{eq:LTAOuterBeh}
v(z,y,\tau) = 0 \quad \text{ in } \; D \times\{|y| \geq c\tau\}, \quad \forall \, \tau \geq \tau_c.
\end{equation}
Furthermore, there exist $C,T > 0$ depending on $m$, $D$ and the initial datum, such that for all $\tau \geq T$, the set $\Gamma_v^+(\tau) := \{(z,\Gamma_v^+(z,\tau)): z \in D \}$ is a locally H\"older\footnote{Actually, under suitable conditions on the initial data, $\Gamma_v^+(\tau)$ is locally smooth far away from $\partial\Omega$ (cf.  \cite{Caf1980:art,CaffVazWol1987:art,KienzlerVazquezKoch2018:art,Koch1990:art}).}
hypersurface, and the function $y = \Gamma_v^+(z,\tau)$ satisfies
\begin{equation}\label{eq:FBLongTimeWeak}
\lim_{\tau \to +\infty} \; \sup_{z \in D} \, \frac{\Gamma_v^+(z,\tau)}{\tau} = c_{\ast},
\end{equation}
and
\begin{equation}\label{eq:FBLongTimeStrong}
\sup_{\tau \geq T} \; \sup_{z \in D} \, |\Gamma_v^+(z,\tau) - \Gamma_\varphi^+(z) - c_{\ast}\tau| \leq C.
\end{equation}
\end{thm}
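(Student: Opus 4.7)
My strategy is to sandwich $v$ between shifted traveling-wave profiles of speed $c_\ast$, and convert absolute bounds into the relative-error statement (i) via the uniform limit
\[
\lim_{\xi\to-\infty}\sup_{z\in D}\bigl|\varphi(z,\xi)/\Phi(z)-1\bigr|=0
\]
furnished by the second relation in \eqref{eq:Propphi}. For the \emph{upper barrier}, I define $\overline v_\pm(z,y,\tau):=\varphi(z,\pm y-c_\ast\tau-\ell^+)$ for a large constant $\ell^+>0$ (the right-moving TW and its reflection); these are weak solutions of \eqref{eq:REACTIONTRANSFORMATION}. Since $v_0\in C_0(\overline\Omega)$ is bounded and $\varphi(z,\xi)\to\Phi(z)$ as $\xi\to-\infty$, by first choosing $t_0$ so that $v_0\le\Phi$ in $\Omega$ and then taking $\ell^+$ large, one obtains $\overline v_\pm(\cdot,\tau_0)\ge v_0$ in $\Omega$; parabolic comparison gives $v\le\min\{\overline v_+,\overline v_-\}$ for $\tau\ge\tau_0$, which immediately yields \eqref{eq:LTAOuterBeh} and the upper half of \eqref{eq:FBLongTimeStrong}.

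For the \emph{lower barrier}, I combine Theorem \ref{thm:GildGoncRecap}(ii) (absolute-error convergence to a shifted TW in the outer region $\{|y|\ge\alpha+c_\ast\tau\}$) with Theorem \ref{thm:ExTWsVaz} (absolute-error convergence to $\Phi$ on compact sets): for $\tau_1$ sufficiently large one has $v(\cdot,\tau_1)\ge\underline v_\pm(\cdot,\tau_1)$ in $\Omega$, where $\underline v_\pm(z,y,\tau):=\varphi(z,\pm y-c_\ast\tau-\ell^-)$ and $\ell^-$ is a finite shift depending only on the initial data. Comparison propagates $v\ge\max\{\underline v_+,\underline v_-\}$ for $\tau\ge\tau_1$, providing the lower half of \eqref{eq:FBLongTimeStrong}. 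Combining both halves with the boundedness of $\Gamma_\varphi^+$ yields \eqref{eq:FBLongTimeStrong}; dividing by $\tau$ gives \eqref{eq:FBLongTimeWeak}. The local H\"older regularity of the surface $\Gamma_v^+(\tau)$ is a standard consequence of the PME free-boundary theory cited in the footnote.

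To prove part (i), fix $c\in(0,c_\ast)$ and restrict to $|y|\le c\tau$. All four shifts $\pm y-c_\ast\tau-\ell^\pm$ tend to $-\infty$ uniformly as $\tau\to+\infty$. Dividing the sandwich $\max\{\underline v_+,\underline v_-\}\le v\le\min\{\overline v_+,\overline v_-\}$ by $\Phi$ and applying \eqref{eq:Propphi} to each ratio $\varphi/\Phi$ yields $v/\Phi\to 1$ uniformly on $D\times\{|y|\le c\tau\}$, proving \eqref{eq:LTAInnerBeh}. The Hopf nondegeneracy \eqref{eq:SSPosHopf} of $\Phi$ ensures the ratios remain bounded uniformly up to $\partial D$.

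The principal obstacle is the construction of the lower TW barrier with a \emph{finite} shift $\ell^-$ independent of $\tau$. A single shifted TW has support on a half-line where $\varphi\to\Phi$; this interior behaviour as $y\to-\infty$ prevents it from lying below the compactly supported $v(\cdot,\tau_1)$ unless one uses the reflection to form a ``double-sided wave'' subsolution $\max\{\underline v_+,\underline v_-\}$, or equivalently exploits the precise asymptotic shape furnished by Theorem \ref{thm:GildGoncRecap}(ii) to pin down $\ell^-$. The boundedness of $\ell^-$ independently of $\tau$ is exactly what makes the sharp estimate \eqref{eq:FBLongTimeStrong} possible, as opposed to the weaker \eqref{eq:FBLongTimeWeak}.
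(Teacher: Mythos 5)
Your upper barrier (Step corresponding to part (ii) and the upper half of \eqref{eq:FBLongTimeStrong}) is essentially the paper's argument and is fine: after shrinking $t_0$ so that $v_0\le\tfrac12\Phi$, a single shifted profile $\varphi(z,y-c_\ast\tau-\ell^+)$ dominates $v$ in all of $\Omega$ by comparison. The gap is in the lower barrier, and it is exactly at the point you flag as "the principal obstacle" but do not actually resolve. The profile $\underline v_+(z,y,\tau)=\varphi(z,y-c_\ast\tau-\ell^-)$ tends to $\Phi(z)>0$ as $y\to-\infty$, while $v(\cdot,\tau_1)$ vanishes beyond its left free boundary; so the initial ordering $v(\cdot,\tau_1)\ge\underline v_+(\cdot,\tau_1)$ fails in $\Omega$ for \emph{every} finite $\ell^-$, and no comparison can be started. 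Passing to $\max\{\underline v_+,\underline v_-\}$ makes this worse, not better: since $\underline v_+\approx\Phi$ for $y\ll 0$ and $\underline v_-\approx\Phi$ for $y\gg 0$ (and both are close to $\Phi$ near $y=0$ once $\tau_1$ is large), the maximum is close to $\Phi$ on essentially all of $\Omega$ and certainly does not lie below the compactly supported $v(\cdot,\tau_1)$. Nor does Theorem \ref{thm:GildGoncRecap}(ii) rescue you: absolute-error convergence to $\varphi(\cdot,\cdot-c_\ast\tau-\ell_0)$ permits $v$ to lie slightly \emph{below} the shifted wave, and near the free boundary (where $\varphi$ degenerates at the corner with $\partial D$) a small absolute error cannot be absorbed into a bounded shift; in any case it says nothing in the region behind the opposite front where the ordering genuinely fails.

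The paper's resolution, which is the real content of the proof, is to compare only on the half-tube $D\times[0,\infty)$ and to control the artificial lateral boundary $D\times\{y=0\}$ for \emph{all} times. This forces a genuinely different subsolution: either a dilated wave with strictly smaller speed $c_\vep<c_\ast$ on the enlarged star-shaped domain $\lambda_\vep D$ (Section \ref{Section:ProofMainTheorem1}), or, in general, a modulated wave $f(\tau)\varphi(z,y-g(\tau))$ where $f\nearrow 1$ at an algebraic rate slower than the \emph{exponential} rate at which $v(z,0,\tau)$ approaches $\Phi(z)$ (Corollary \ref{cor:speed of convergence} plus Lemmas \ref{lem:PropfSubsol}--\ref{lem:BoundBelowSubsol}); this quantitative rate comparison is what makes the ordering on $\{y=0\}$ hold for all $\tau$, and the asymptotics $g(\tau)-c_\ast\tau\to\mathrm{const}$ are what yield the sharp lower half of \eqref{eq:FBLongTimeStrong}. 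Without some version of this half-tube construction and the rate estimate on $\{y=0\}$, your sandwich has no lower slice, and consequently \eqref{eq:LTAInnerBeh}, \eqref{eq:FBLongTimeWeak} and the lower bound in \eqref{eq:FBLongTimeStrong} are not proved.
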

Before moving forward, it is important to rephrase the above statement and highlight a couple of significant facts. First of all, rewriting \eqref{eq:LTAInnerBeh} and \eqref{eq:LTAOuterBeh} in terms of the solution $u$ to \eqref{eq:PLETUBULARDOMAIN}, we obtain that
\[
\lim_{t \to +\infty} \sup_{z \in D, \; |y| \leq c\ln t} \bigg| \frac{u(z,y,t)}{U(z,t)} - 1 \bigg| = 0,
\]
for every fixed $c \in (0,c_{\ast})$, where $U = U(z,t)$ is the usual nonnegative solution to \eqref{eq:PLEBOUNDEDDOMAINSEPVARIABLE} in self-similar form \eqref{eq:FriendlyGiant}, while
\[
u(z,y,t) = 0 \quad \text{ in } \; D \times\{|y| \geq c\ln t\}, \quad \forall \, t \geq t_c,
\]
for every $c > c_{\ast}$ and some $t_c > 0$ large enough. The first relation is the most important and improves the long-time behaviour shown by V\'azquez from two perspectives: to the one hand, we have that $u$ converges to $U$ not only in absolute error, but also in \emph{relative error}; on the other, the convergence takes place not only on arbitrary compact subset of $\overline{\Omega}$, but also in \emph{expanding} sets of the form $D \times \{|y| \leq c \ln t\}$ for large times, where $c < c_\ast$ is fixed. In this sense, the \emph{Friendly Giant} is a \emph{stable steady state} to the PME in tubes and $c_\ast$ is the rate of convergence. Conversely, \eqref{eq:LTAOuterBeh} tells us that the \emph{free boundary} of $u$ must move slower than $c\ln t$ for large $t$'s, for every fixed $c > c_{\ast}$. We stress that \eqref{eq:LTAInnerBeh} and \eqref{eq:LTAOuterBeh} are \emph{sharp}, in the sense that both limits fail if $c = c_\ast$, in view of  Theorem \ref{thm:GildGoncRecap}, part (ii)).

To conclude, we notice that, as an immediate consequence of \eqref{eq:LTAInnerBeh} and \eqref{eq:LTAOuterBeh}, the limit in \eqref{eq:FBLongTimeWeak} follows, while the relation in \eqref{eq:FBLongTimeStrong} gives more precise information about the location of the \emph{free boundary} in the spirit of Gilding and Goncerzewicz, see Theorem \ref{thm:GildGoncRecap}, parts (iii)--(iv). Our estimate is rougher, but the bound from below is \emph{uniform} in $D$.
%
%
%
%
%
\subsection*{Structure of the paper.} The remaining part of the paper is divided in sections as follows.

In Section \ref{Section:RelErrorBdd} we improve the statement of \cite[Theorem 3.2]{Vazquez2004:art} by showing that the solution to problem \eqref{eq:InDataTrasProb}--\eqref{eq:REACTIONTRANSFORMATION} converges to the stationary solution $\Phi$ not only in absolute error, but also in \emph{relative error} in any compact subset of $\overline{\Omega}$.

In Section \ref{Section:ProofMainTheorem1} we show Theorem \ref{thm:LongTimeBeh} under the assumption that $D$ is star-shaped. In this setting the proof is more direct and is based on a comparison argument with a rescaled version of the TW  \eqref{eq:AnsatzTW}. The proof is independent from the one we give for general bounded domains.

In Section \ref{Section:ProofMainTheorem1bis} we treat the general case, by constructing new families of barriers modeled on the usual wave solution. To compare, we exploit Corollary \ref{cor:speed of convergence}, in which we quantify the rate of convergence in \emph{relative error} in compact subsets of $\overline{\Omega}$.

%
%
%
%
%
%
%
%
%
%
%
\section{Convergence in relative error in bounded cylinders}\label{Section:RelErrorBdd}
In this section we show convergence in relative error in sets of the form $D \times (-A,A)$, where $A > 0$ is arbitrarily fixed and, as consequence, in Corollary \ref{cor:speed of convergence}, we prove a second technical result, where we quantify the \emph{speed of convergence} in relative error.
\begin{prop}\label{LEMMACONVERGENCEINBOUDEDDOMAINS}
Let $v$ be the nonnegative weak solution to problem \eqref{eq:InDataTrasProb}--\eqref{eq:REACTIONTRANSFORMATION} with nontrivial initial data $v_0$. Then, for every $A > 0$,
\[
\lim_{\tau \to \infty} \sup_{z \in D, \, |y| \leq A}  \;\left| \frac{v(z,y,\tau)}{\Phi(z)} - 1 \right| = 0,
\]
where $\Phi = \Phi(z)$ is the unique nonnegative weak solution to the stationary problem \eqref{eq:STATIONARYPROBLEM}.
\end{prop}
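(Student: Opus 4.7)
The plan is to sandwich $v(z,y,\tau)$ between two multiples of $\Phi(z)$ whose relative error tends to $0$, by comparing $v$ with solutions of PME problems posed on \emph{bounded} spatial domains, to which Aronson--Peletier's Theorem~\ref{ASYMPTOTICSINBOUNDEDDOMAINS} applies in relative error.

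\textbf{Upper bound.} Because $v_0^m \in C^1(\overline\Omega)$ vanishes on $\partial\Omega$ at the same rate as $\Phi^m$ (the Hopf condition~\eqref{eq:SSPosHopf}), the ratio $v_0/\Phi$ is uniformly bounded and one can pick $M \geq 1$ with $v_0 \leq M\Phi$ on $\overline\Omega$. Let $\bar v = \bar v(z,\tau)$ solve \eqref{eq:REACTIONTRANSFORMATIONBoundD} on $D$ with initial datum $M\Phi$. Regarded as $y$-independent, $\bar v$ solves \eqref{eq:REACTIONTRANSFORMATION} on the full tube with data dominating $v_0$, so the comparison principle yields $v \leq \bar v$. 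Theorem~\ref{ASYMPTOTICSINBOUNDEDDOMAINS} then gives $\bar v(\cdot,\tau)/\Phi \to 1$ uniformly in $\overline D$, and the one-sided bound $\limsup_{\tau\to\infty}\sup_{D\times[-A,A]} v/\Phi \leq 1$ follows.

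\textbf{Lower bound.} This is the harder direction, since the datum $v_0$ may decay at infinity and therefore no constant-in-$y$ subsolution can lie beneath it. My plan is to localise in $y$. Fix $L > A+1$, set $\Omega_L := D \times (-L,L)$, and let $\Phi_L$ denote the unique positive solution of the stationary problem $-\Delta\Phi_L^m = \Phi_L/(m-1)$ in $\Omega_L$ with zero Dirichlet data. Comparing with the $y$-constant supersolution $\Phi$ gives $\Phi_L \leq \Phi$ together with the monotonicity $\Phi_L \leq \Phi_{L'}$ for $L \leq L'$; identifying the monotone limit $\lim_L \Phi_L$ with $\Phi$ (see below) delivers $\Phi_L \geq (1-\eta)\Phi$ on $\overline D \times [-A,A]$ once $L$ is large. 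Next, using Theorem~\ref{thm:ExTWsVaz} together with the standard $C^1$ regularity of $v^m$ up to the fixed boundary $\partial D$ and the Hopf principle, one finds $\tau_1$ and $\varepsilon_0 > 0$ such that $v(\cdot,\tau_1) \geq \varepsilon_0 \Phi_L$ on $\overline\Omega_L$. Let $w$ solve the rescaled PME on $\Omega_L$ with zero Dirichlet data and initial datum $\varepsilon_0 \Phi_L$ at $\tau = \tau_1$; since $\varepsilon_0 \Phi_L$ is a stationary subsolution, $w$ is nondecreasing in $\tau$ and Aronson--Peletier applied on $\Omega_L$ yields $w/\Phi_L \to 1$ uniformly on $\overline\Omega_L$. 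Comparison inside $\Omega_L$ (noting $w = 0 \leq v$ on the caps $D \times \{\pm L\}$, and matching zero Dirichlet data on $\partial D \times [-L,L]$) gives $w \leq v$ for $\tau \geq \tau_1$, whence for $\tau$ sufficiently large
\[
v(z,y,\tau) \geq w(z,y,\tau) \geq (1-\eta)\,\Phi_L(z,y) \geq (1-\eta)^2\, \Phi(z) \quad \text{on } D\times[-A,A],
\]
which closes the argument by the arbitrariness of $\eta$.

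\textbf{Main obstacle.} The most delicate step is the identification $\lim_L \Phi_L = \Phi$ on compact subsets of $\Omega$: a Liouville-type uniqueness statement for bounded positive stationary solutions in the tube. A natural way to attack it is to compare $\Phi_L$ with the long-time evolution under~\eqref{eq:REACTIONTRANSFORMATION} of a small, $y$-localised multiple $\varepsilon\Phi(z)\chi(y)$, invoking Theorem~\ref{thm:ExTWsVaz} to identify the limit. The related technicality $v(\cdot,\tau_1) \geq \varepsilon_0 \Phi_L$ up to $\partial D$ requires a uniform lower bound for $-\partial_\nu v^m$ along $\partial D \times [-L,L]$, obtained from $C^1$ regularity of $v^m$ near the fixed boundary combined with the strong maximum principle applied to the (locally uniformly parabolic) equation satisfied by $v^m$ away from the free boundary.
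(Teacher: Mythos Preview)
Your upper bound is essentially the paper's (the paper is slightly slicker: it shows directly that $u_0\le U(\cdot,\bar t)$ for some $\bar t$, hence $v\le\Phi$ without invoking Theorem~\ref{ASYMPTOTICSINBOUNDEDDOMAINS} again). For the lower bound, however, the paper takes a quite different and more elementary route: instead of invoking the full Aronson--Peletier theorem on $\Omega_L$ and then having to identify $\Phi_L$ with $\Phi$ in relative error, it writes down an \emph{explicit separable subsolution}
\[
\Psi(z,y)=\Phi(z)\,[\lambda\cos(\alpha y)]^{1/m}\quad\text{on }D\times(-\tfrac{\pi}{2\alpha},\tfrac{\pi}{2\alpha}),
\]
checks by hand that $-\Delta\Psi^m\le \Psi/(m-1)$ whenever $\lambda^{(m-1)/m}[1+\alpha^2(m-1)\|\Phi\|_\infty^{m-1}]\le 1$, and then feeds this into a single technical ingredient from \cite{AronsonPeletier1985:art} (their Proposition~5, the Hopf-type lower bound $u^m(\cdot,T_\ast)\ge bH$ with $-\Delta H=1$). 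Because $\Psi$ already carries the factor $\Phi(z)$, the relative-error bound up to $\partial D$ comes for free; one only has to push $\alpha\to 0$ to make the cylinder wide. This completely sidesteps both of the obstacles you flag.

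Those obstacles are real and your sketched attacks are not yet convincing. The claim $\Phi_L\to\Phi$ in \emph{relative error} on $\overline D\times[-A,A]$ does not follow from Theorem~\ref{thm:ExTWsVaz}, which gives only uniform (absolute) convergence on compacta; what you actually need is $C^1$ convergence of $\Phi_L^m$ up to $\partial D\times[-A,A]$ (boundary Schauder plus a translation-invariance argument to identify the limit as the $y$-independent $\Phi$), and this has to be done on a domain with edges. Likewise, the inequality $v(\cdot,\tau_1)\ge \varepsilon_0\Phi_L$ up to $\partial\Omega_L$ amounts to a parabolic Hopf estimate for $v^m$ along $\partial D$---essentially the content of \cite[Proposition~5]{AronsonPeletier1985:art} that the paper invokes directly. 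Finally, applying Theorem~\ref{ASYMPTOTICSINBOUNDEDDOMAINS} on $\Omega_L$ requires smoothing the corners (the paper itself runs into this and resolves it by passing to a smooth $\tilde\Omega$ with $\Omega_\alpha\subset\tilde\Omega\subset\Omega$). Your strategy can be completed along these lines, but the paper's explicit barrier is shorter and avoids every one of these detours.
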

\begin{proof} Let us fix $A > 0$ and $\vep \in (0,1)$. Our proof exploits a technical proposition from \cite{AronsonPeletier1985:art}: for this reason, we follow Aronson and Peletier notations and we prove the statement for the solution $u$ to \eqref{eq:PLETUBULARDOMAIN}, that is, we show that
\begin{equation}\label{eq:CCDoubleBound}
(1-\vep)U(z,t) \leq u(z,y,t) \leq U(z,t) \quad \text{ in } D\times(-A,A)\times (T,\infty),
\end{equation}
where $U = U(z,t)$ is the unique solution to \eqref{eq:PLEBOUNDEDDOMAINSEPVARIABLE} in self-similar form defined in
\eqref{eq:STATIONARYPROBLEM} and \eqref{eq:FriendlyGiant}, and $T \geq 0$ must be suitable chosen depending only on $m$, $D$, $u_0$, $A$ and $\vep$.

\noindent \emph{Step 1: The upper bound.} To show the upper bound in \eqref{eq:CCDoubleBound}, we follow the ideas of \cite[Proof of Theorem 1]{AronsonPeletier1985:art}. First we notice that, since $U$ is a weak solution to \eqref{eq:PLEBOUNDEDDOMAINSEPVARIABLE} and it is independent of $y$, then it is a weak solution to the same problem posed in $\Omega$. So, if there is $\overline{t} > 0$ such that
\begin{equation}\label{eq:CCBAbovet0}
u_0(z,y) \leq U(z,\overline{t}) \quad \text{ in } \Omega,
\end{equation}
then, by comparison,
\[
u(z,y,t) \leq U(z,\overline{t} + t) = (\overline{t} + t)^{-\frac{1}{m-1}} \Phi(z) \leq t^{-\frac{1}{m-1}} \Phi(z) = U(z,t) \quad \text{ in } \Omega\times(0,\infty),
\]
which is exactly our upper bound with $T = 0$ and $A = +\infty$. Assume by contradiction that~\eqref{eq:CCBAbovet0} does not hold for any $\overline{t} > 0$. Then there exists a sequence
$\{(z_k,y_k)\}_{k \in \NN} \subset \supp u_0 \subset \overline{\Omega}$ such that
\[
0 \leq \Phi^m(z_k) \leq \tfrac{1}{k} u_0^m(z_k,y_k),
\]
for every $k$. Since $\supp u_0$ is compact, there is $(\overline{z},\overline{y}) \in \supp u_0$ such that $(z_k,y_k) \to (\overline{z},\overline{y})$ as $k \to +\infty$ (up to passing to a suitable subsequence) and, since $u_0$ is bounded and $\Phi > 0$ in $D$, it must be $\overline{z} \in \partial D$. Consequently, dividing both sides of the above inequality by $\dist(z_k,\partial D)$ and using the third assumption in \eqref{eq:AssInitialData}, we pass to the limit as $k \to +\infty$ to find
\[
\partial_{\nu} \Phi^m(\overline{z}) = 0,
\]
where $\nu$ is the outward unit normal to $\partial D$, in contradiction with \cite[Proposition 1]{AronsonPeletier1985:art} (see also \eqref{eq:SSPosHopf}).

\noindent\emph{Step 2: A subsolution.} To prove the lower bound we construct an explicit subsolution and we exploit a technical result proved in  \cite{{AronsonPeletier1985:art}} in the bounded domain framework.

To construct the barrier from below, we fix $\alpha,\lambda > 0$ and consider the function
\[
f(y) := [ \lambda \cos(\alpha y) ]^{\frac{1}{m}}, \quad y \in I_\alpha:= (-\tfrac{\pi}{2\alpha},\tfrac{\pi}{2\alpha}),
\]
and we set
\begin{equation*}\label{eq:EllipticSubSol}
\Psi(z,y) := \Phi(z) f(y), \quad (z,y) \in \Omega_\alpha := D\times I_\alpha.
\end{equation*}
We claim that
\begin{equation}\label{eq:EllSubSolProb}
\begin{cases}
-\Delta \Psi^m \leq \tfrac{1}{m-1}\Psi \quad &\text{ in } \Omega_\alpha, \\
\Psi = 0                                \quad &\text{ in } \partial\Omega_\alpha,
\end{cases}
\end{equation}
for every $\lambda,\alpha > 0$ satisfying
\begin{equation}\label{eq:EllSubSolCondLamAlp}
\lambda^{\frac{m-1}{m}} \big[ 1 + \alpha^2(m-1) \|\Phi\|_{L^\infty(D)}^{m-1} \big] \leq 1.
\end{equation}
To see this, we notice that since $\nabla_{z,y} \Psi^m(z,y) = (f^m(y) \nabla_z \Phi^m(z), \Phi^m(z) (f^m)'(y))$ and $\Phi$ is a weak solution to \eqref{eq:STATIONARYPROBLEM}, it follows
\[
\begin{aligned}
-\Delta \Psi^m(z,y) &= - f^m(y) \Delta_z \Phi^m(z) - \Phi^m(z) (f^m)''(y) = \tfrac{\Phi(z)}{m-1} f^m(y) +\alpha^2\ \Phi^m(z) f^m(y) \\
&= \frac{\Phi(z)}{m-1} f^m(y)\big(1 +\alpha^2(m-1)\Phi^{m-1}(z) \big) \quad \text{ in } \Omega_\alpha.
\end{aligned}
\]
Consequently, $\Psi$ satisfies \eqref{eq:EllSubSolProb} if
\begin{equation}\label{eq:EllSubSolIneqf}
f^m(y)\big(1 +\alpha^2 (m-1)\Phi^{m-1}(z)\big) \leq f(y) \quad  \text{ in } \Omega_\alpha.
\end{equation}
Now, writing down $f$ explicitly and using that $m > 1$, we find
\[
f^m(y)\big(1 +\alpha^2 (m-1)\Phi^{m-1}(z)\big) \leq \lambda \cos(\alpha y) \big[ 1 + \alpha^2(m-1) \|\Phi\|_{L^\infty(D)}^{m-1} \big],
\]
and thus \eqref{eq:EllSubSolIneqf} is automatically satisfied if
\[
[ \lambda \cos(\alpha y) ]^{\frac{m-1}{m}} \big[ 1 + \alpha^2(m-1) \|\Phi\|_{L^\infty(D)}^{m-1} \big] \leq 1\quad \text{ in } \Omega_\alpha,
\]
which, in turn, holds true whenever $\lambda,\alpha > 0$ satisfy \eqref{eq:EllSubSolCondLamAlp}.

\noindent\emph{Step 3: The lower bound in thin cylinders.} In this step we prove the existence of $T > 0$ (depending only on $m$, $D$, $u_0$, and $\vep$) and $\dd > 0$ (depending only on $m$, $D$, and $\vep$) such that
\begin{equation}\label{eq:CCBBelowThinCyl}
u(z,y,t) \geq (1-\vep)U(z,t) \quad \text{ in } D\times(-\delta,\delta)\times(T,\infty).
\end{equation}
First we notice that, if $\lambda \leq 1$, then
\[
-\Delta (a \Psi^m) \leq \tfrac{a}{m-1}\Psi \leq \tfrac{a}{m-1}\|\Psi\|_{L^\infty(\Omega_\alpha)} \leq \tfrac{a}{m-1}\|\Phi\|_{L^\infty(D)} \leq 1 \quad \text{ in } \Omega_\alpha,
\]
for every $0 < a \leq (m-1)/\|\Phi\|_{L^\infty(D)}$, and thus, if $H$ denotes the unique solution to
\begin{equation}\label{eq:CCBarrierH}
\begin{cases}
-\Delta H = 1    \quad &\text{ in } \Omega_\alpha, \\
H = 0                    \quad &\text{ in } \partial\Omega_\alpha,
\end{cases}
\end{equation}
we obtain $-\Delta (a \Psi^m) \leq -\Delta H$ in $\Omega_\alpha$, with $a\Psi^m = H = 0$ in $\partial \Omega_\alpha$, and so
\begin{equation}\label{eq:HopfEllCompFund}
a\Psi^m \leq H \quad \text{ in } \Omega_\alpha,
\end{equation}
by virtue of the (elliptic) comparison principle.

Now, let $\underline{u}$ be the solution to problem \eqref{eq:PLEBOUNDEDDOMAIN} posed in $\Omega_\alpha$ (that is, $D = \Omega_\alpha$ in \eqref{eq:PLEBOUNDEDDOMAIN}). By comparison, we have $\underline{u} \leq u$ in $\Omega_\alpha \times (0,\infty)$. On the other hand, \cite[Proposition 5]{{AronsonPeletier1985:art}}\footnote{This result holds in domains with \emph{smooth} boundary: to bypass this technical difficulty, it is enough to solve problem \eqref{eq:CCBarrierH} in a bounded domain $\tilde{\Omega}$ with smooth boundary, satisfying $\Omega_\alpha \subset \tilde{\Omega} \subset \Omega$.}
assures the existence of $T_\ast,b > 0$ depending only on $m$, $u_0$, $D$ and $\alpha$ such that
\[
bH(z,y) \leq \underline{u}^m(z,y,T_\ast) \leq u^m(z,y,T_\ast) \quad \text{ in } \Omega_\alpha.
\]
Combining the above inequality with \eqref{eq:HopfEllCompFund}, we deduce $u^m(T_\ast) \geq ab \Psi^m$ in $\Omega_\alpha$, that is
\begin{equation}\label{eq:HopfInitialCompFund}
u(z,y,T_\ast) \geq (ab)^{\frac{1}{m}} \Psi(z,y) \quad \text{ in } \Omega_\alpha.
\end{equation}
Now, let $t_\ast > 0$ be fixed and consider the function
\[
w(z,y,t) := (t_\ast + t)^{-\tfrac{1}{m-1}} \Psi(z,y).
\]
Since $\Psi$ satisfies \eqref{eq:EllSubSolProb}, $w$ satisfies
\[
\begin{cases}
\partial_tw \leq \Delta w^m  \quad &\text{ in } \Omega_\alpha \times(0,\infty), \\
w = 0                                \quad &\text{ in } \partial\Omega_\alpha \times(0,\infty),
\end{cases}
\]
and thus, choosing $t_\ast > 0$ such that $(ab)^{\frac{1}{m}} \geq (t_\ast + T_\ast)^{-\tfrac{1}{m-1}}$, inequality \eqref{eq:HopfInitialCompFund} yields
\[
u(z,y,t) \geq (t_\ast + t)^{-\tfrac{1}{m-1}} \Psi(z,y) \quad \text{ in } \Omega_\alpha\times(T_\ast,\infty),
\]
by parabolic comparison. Now, take $\lambda := 1-\tfrac{\vep}{2}$ and notice that, by continuity, there exists $\dd_0 \in (0,1)$ depending only on $\vep$ and $m$, such that
\[
\cos(\alpha y) \geq (1-\tfrac{\vep}{2})^{m-1} \qquad \text{ if and only if } \qquad y \in (-\tfrac{\delta_0}{\alpha},\tfrac{\delta_0}{\alpha}).
\]
Consequently, we may choose $\alpha$ small enough (depending only $\vep$, $D$ and $m$) such that \eqref{eq:EllSubSolCondLamAlp} is satisfied and so, as a consequence of the last two inequalities, it follows that
\begin{equation}\label{eq:CCBBelowThinAst}
u(z,y,t) \geq (1-\tfrac{\vep}{2}) (t_\ast + t)^{-\tfrac{1}{m-1}} \Phi(z) \quad \text{ in } D\times (-\tfrac{\delta_0}{\alpha},\tfrac{\delta_0}{\alpha})\times(T_\ast,\infty).
\end{equation}
Choosing $T \geq T_\ast$ large enough (depending on $\vep$) and $\dd = \frac{\dd_0}{\alpha}$, we have
\[
u(z,y,t) \geq (1-\tfrac{\vep}{2}) \left( \frac{t}{t_\ast + t} \right)^{\tfrac{1}{m-1}} U(z,t) \geq (1-\vep) U(z,t) \quad \text{ in } D\times (-\delta,\delta)\times(T,\infty),
\]
and \eqref{eq:CCBBelowThinCyl} follows.

\noindent\emph{Step 4: The lower bound in finite cylinders.} To complete the proof of the lower bound, we fix $A > 0$ and we choose $\alpha > 0$ satisfying both \eqref{eq:EllSubSolCondLamAlp} and
\[
\dd = \frac{\dd_0}{\alpha} \geq A.
\]
Since $\dd_0$ depends only on $m$ and $\vep$, by \eqref{eq:EllSubSolCondLamAlp} we have that $\dd$ depends only on $m$, $D$, $\vep$ and $A$. At this point, we may repeat the argument above: using \cite[Proposition 5]{AronsonPeletier1985:art}, we find $T_\ast,b > 0$ (now depending also on $A$) for which \eqref{eq:HopfInitialCompFund} is satisfied, we choose $t_\ast > 0$ exactly as above, deducing that \eqref{eq:CCBBelowThinAst} holds true in $D \times (-A,A) \times (T_\ast,\infty)$. Finally, taking $T \geq T_\ast$ large enough (depending on $m$, $D$, $u_0$, $A$ and $\vep$), the lower bound in \eqref{eq:CCDoubleBound} follows.
\end{proof}
\begin{rem} Notice that Step 1 of the proof yields the existence of a time $\bar{t} > 0$ such that $u(z,y,t) \leq U(z,\bar{t}+t)$ in $\Omega\times(0,\infty)$. From now on, we will always assume that $t_0 \in (0,\bar{t})$, where $t_0$ is the parameter appearing in \eqref{eq:SECONDRENORMALIZATIONFORMULA}--\eqref{eq:InDataTrasProb}. In this way, we automatically have the universal upper-bound for the solution to \eqref{eq:REACTIONTRANSFORMATION}:
\[
v(z,y,\tau) \leq \Phi(z) \quad \text{ in } \Omega\times(\tau_0,\infty),
\]
where we recall that $\tau_0 := \ln t_0$.
\end{rem}

The last part of this section consists on proving the exponential speed of convergence of the solution $v$ to the friendly giant, a key result in order to stablish comparison on a parabolic boundary later on when comparing with the sub and supersolutions that converge to travelling waves. An intermediate result prior to this is the following lemma.

\begin{lem}\label{lem:concavity_mantained}
Given $A,k>0$, and $\varepsilon\in(0,1/2)$, let $f$ be the solution of
\begin{equation}\label{eq:SubSolExpConvFinal}
	\begin{cases}
		f_\tau = k(f^m)_{yy} + \frac{f}{m-1}(1-f^{m-1}),&(y,\tau)\in (-A,A)\times \RR_+,\\
		f(y,0)=1-\varepsilon,&y\in  [-A,A],\\
		f(\pm A,\tau)=1-\varepsilon,&\tau \in \RR_+.
	\end{cases}
\end{equation}

\noindent{\rm (i)} $(f^m)_{yy}\leq 0$ in $[-A,A]\times \RR_+$.

\noindent{\rm (ii)} There exists $\varepsilon_*=\varepsilon_*(m)\in(0,1/2)$ such that, if $\varepsilon \in (0,\varepsilon_*)$, then, given $B\geq0$, $B<A$,
\begin{equation}
\label{eq:exponential.convergence}
f(y,\tau)\geq 1-\delta e^{-\lambda\tau}\text{ for all }y\in[-B, B],\ \tau>T, \quad\text{for some constants }\delta,\lambda>0,\ T\ge0.
\end{equation}
\end{lem}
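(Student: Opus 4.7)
For part (i), the plan is to apply the parabolic maximum principle to the quantity $G(y,\tau) := (f^m)_{yy}$. The boundary and initial data for $G$ are favourable: at $\tau=0$, $f \equiv 1-\varepsilon$ is constant, so $G(y,0)=0$; and at $y = \pm A$, since $f(\pm A,\tau)$ is constant in $\tau$, evaluating the PDE at the boundary gives
$$G(\pm A,\tau) \;=\; -\frac{(1-\varepsilon)\bigl(1-(1-\varepsilon)^{m-1}\bigr)}{k(m-1)} \;<\; 0$$
for $\varepsilon \in (0,1/2)$ and $m > 1$. Differentiating the PDE twice in $y$ (most conveniently after rewriting the equation in terms of $V := f^m$) yields an evolution equation for $G$ of the form $G_\tau = a\,G_{yy} + b\,G_y + c\,G + S$, where the coefficients $a,b,c$ are bounded (since $f$ stays away from $0$) and the source $S$ is a non-positive term essentially of the form $-(f_y)^2$ times a positive factor coming from the nonlinearity. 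The parabolic maximum principle then yields $G \leq 0$ throughout $[-A,A]\times\mathbb{R}_+$.

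For part (ii), the plan is to combine time-monotonicity, convergence to a stationary profile, and a spectral sub-barrier. Differentiating the PDE in $\tau$, $f_\tau$ solves a linear parabolic equation; since $f_\tau(y,0) = (1-\varepsilon)(1-(1-\varepsilon)^{m-1})/(m-1) > 0$ and $f_\tau(\pm A,\tau) = 0$, the maximum principle forces $f_\tau \geq 0$. Together with the a priori bound $f \leq 1$ (the constant $1$ is a supersolution), this yields a monotone limit $F^*(y) := \lim_{\tau\to\infty} f(y,\tau)$, which is the unique stationary solution with boundary data $1-\varepsilon$. The energy identity for the stationary ODE for $V := F^{*m}$, combined with the concavity from (i), shows that $F^*$ is symmetric, concave in $f^m$, and approaches $1$ uniformly on $[-B,B]$ as $\varepsilon \to 0$, provided $\varepsilon < \varepsilon_*(m)$.

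To quantify the rate, I would linearize the PDE around $F^*$: a perturbation $g := f - F^*$ satisfies $g_\tau = \mathcal{M}\,g + O(g^2)$, where $\mathcal{M}$ is the linearised operator on Dirichlet-zero functions on $[-A,A]$. For $\varepsilon < \varepsilon_*$, $F^*$ is uniformly close to $1$ and $\mathcal{M}$ is a small perturbation of $mk\,\partial_y^2 - \mathrm{Id}$, so $-\mathcal{M}$ has a positive first eigenvalue $\lambda > 0$ (with $\lambda \gtrsim 1$), and a strictly positive first eigenfunction $\psi$. The function $P(y,\tau) := F^*(y) - \delta_0\,e^{-\lambda\tau}\,\psi(y)$ is then a subsolution for $\delta_0$ small enough, matches $f$ on the lateral boundary ($P(\pm A,\tau) = 1-\varepsilon$), and lies below $f(y,0)$ on $[-A,A]$ for $\delta_0$ suitably chosen; parabolic comparison gives $f \geq P$. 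Combining with $F^*(y) \geq 1 - C_\varepsilon$ on $[-B,B]$ (with $C_\varepsilon \to 0$) produces the claimed lower bound $f(y,\tau) \geq 1 - \delta\,e^{-\lambda\tau}$ after adjusting $\delta$ and $T$.

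The main difficulty will be the last step: rigorously verifying the subsolution inequality for $P$, which requires absorbing the quadratic error $O\bigl((\delta_0 e^{-\lambda\tau}\psi)^2\bigr)$ from the Taylor expansion of the nonlinearity into the leading-order spectral gain $\lambda\,\delta_0\,e^{-\lambda\tau}\,\psi$. This forces $\delta_0$ to be small and hence imposes $\tau > T$ for $T$ sufficiently large. A secondary point is establishing the spectral gap of $-\mathcal{M}$ uniformly in $\varepsilon$ small, which is precisely what the smallness threshold $\varepsilon_*(m)$ encodes.
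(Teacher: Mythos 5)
Part (i) of your proposal is essentially the paper's argument. The paper sets $h:=(f^m)_{yy}$, computes $h(y,0)=0$ and the negative constant boundary value of $h$ at $y=\pm A$ from the equation (you correctly keep the factor $1/k$ there), derives an equation of the form $h_\tau=mk\,(a h)_{yy}+\tfrac{m}{m-1}b\,h-c$ with $c=(2-\tfrac1m)f^{-1}\bigl((f^m)_y\bigr)^2\ge 0$, and concludes $h\le 0$ by comparison with the zero supersolution. Nothing to add.

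Part (ii) is where you diverge from the paper, and where your argument has a genuine gap at the final step. The paper substitutes $v=f^m$, checks that for $\varepsilon<\varepsilon_*(m)$ the function $v$ is a supersolution of the semilinear problem $\tfrac1m v^{1/m-1}v_\tau=kv_{yy}+\tfrac1{2m}(1-v)$, shows that the solution $\psi$ of $\psi_\tau=\tfrac{km}{2}\psi_{yy}+\tfrac14(1-\psi)$ with the same data is a subsolution of that problem (using $\psi_\tau\ge0$ and $\tfrac12\psi^{1/m-1}\le1$), and imports the exponential lower bound for $\psi$ from Du--Pol\'a\v{c}ik. Your route goes through the monotone limit $F^*$ and a spectral sub-barrier $P=F^*-\delta_0e^{-\lambda\tau}\psi$. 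The problem is the last sentence of your third paragraph: $F^*$ is the stationary solution with boundary values $1-\varepsilon$, and it is \emph{strictly} below $1$ in the interior (if $V=F^{*m}$ touched $1$ at an interior maximum, then $V'=0$ there and ODE uniqueness for $kV''+\tfrac{1}{m-1}(V^{1/m}-V)=0$ would force $V\equiv1$, contradicting the boundary data). Linearizing the stationary ODE near the value $1$ gives a deficit $1-F^*(y)$ of order $\varepsilon\cosh(y/\sqrt{km})/\cosh(A/\sqrt{km})$, a fixed positive quantity on $[-B,B]$ independent of $\tau$. Hence your comparison yields at best $f\ge 1-C_{\varepsilon,A,B}-\delta_0e^{-\lambda\tau}$ with $C_{\varepsilon,A,B}>0$, and no adjustment of $\delta$ and $T$ converts this into $1-\delta e^{-\lambda\tau}$: the right-hand side of \eqref{eq:exponential.convergence} tends to $1$ as $\tau\to\infty$, while your lower bound stagnates at $1-C_{\varepsilon,A,B}$. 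A time-independent deficit cannot be absorbed into a decaying exponential.

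To close the argument you must make the deficit quantitative rather than wave it away: keep the conclusion in the form $f\ge F^*-\delta e^{-\lambda\tau}$ together with the estimate $1-F^*\le C\varepsilon e^{-c(A-B)}$ on $[-B,B]$, or equivalently restrict the bound \eqref{eq:exponential.convergence} to a time window $\tau\lesssim c(A-B)/\lambda$ (this is the form in which the estimate is actually usable downstream, where $A$ may be taken large relative to the times of interest). Note that your decomposition through $F^*$ makes this issue unavoidable --- indeed, letting $\tau\to\infty$ in \eqref{eq:exponential.convergence} would force $F^*\ge1$ on $[-B,B]$ --- so the ``after adjusting $\delta$ and $T$'' step is not a technicality but the crux; the paper's reduction to the explicit semilinear problem faces the same quantitative point inside the cited lemma, whose constants must likewise be tracked in their dependence on $A$ and $B$.
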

\begin{proof} (i) Let $h:=(f^m)_{yy}$. We want to check that $h \leq 0$ in $(-A,A)\times\RR_+$, i.e., $f^m$ is concave. Since $f \geq 1-\vep$ in $[-A,A]\times\RR_+$ (by comparison with the constant subsolution $1-\vep$), we have that $f$ is smooth. Consequently, we may change the order of differentiation to obtain
	$$
	h_\tau = \big((f^m)_{yy}\big)_\tau = \big((f^m)_\tau \big)_{yy}=\big( mf^{m-1}f_\tau \big)_{yy},
	$$
	and thus, using the equation of $f$,
	$$
	h_\tau = m\left[ kf^{m-1}h + \frac{1}{m-1}(f^m-f^{2m-1}) \right]_{yy}.
	$$
	Using that
	$$
	(f^{2m-1})_{yy}= \big[(f^m)^{2-1/m}\big]_{yy} = \left( 2-\tfrac{1}{m} \right)f^{m-1}h + \left( 2-\tfrac{1}{m}\right)\left( 1-\tfrac{1}{m}\right)\left((f^m)_y \right)^2f^{-1},
	$$
	we obtain the equation
	$$
	h_\tau = m k \left(f^{m-1}h\right)_{yy} + \tfrac{m}{m-1}\left[ 1-\left( 2-\tfrac{1}{m}\right)f^{m-1}\right]h   - \left(2-\tfrac{1}{m} \right) \left((f^m)_y \right)^2f^{-1}.
	$$
	Since $f\in (1-\varepsilon, 1)$,
the previous equation can be written in the form
	$$
	h_\tau = mk\left(a(t,y)h\right)_{y} + \tfrac{m}{m-1}b(t,y)h   -c(t,y),
	$$
	where $a$ is a $C^2$, bounded and positive function, $b$ is continuous and bounded and $c\geq 0$. As a consequence, we obtain by our initial assumption
	$$
	\begin{cases}
	\displaystyle h_\tau = mk\left(a(y,\tau)h\right)_{yy} + \frac{m}{m-1}b(y,\tau)h -c(y,\tau), &y\in (-A,A), \, \tau>0,\\
	h(y,0)=0, &y\in [-A,A],\\
	\displaystyle h(\pm A,\tau)=\frac{(1-\varepsilon)^m-(1-\varepsilon)}{m-1}<0,  &\tau>0,
	\end{cases}
	$$
	where we have used that $f$ satisfies the equation in~\eqref{lem:concavity_mantained} at the boundary $y=\pm A$ to compute the value of $h$ there.
Since $c \geq 0$ the function $\overline{h} = 0$ is a supersolution and so $h \leq 0$ follows by comparison.

\noindent (ii) The exponential speed of convergence of $ f(0,\tau)$ to 1 is a consequence of the ``flatness" of the solution near the value 1, which implies that the solution resembles that of the ordinary differential equation
$f' =  \frac{f}{m-1}(1-f^{m-1})$, which in turn can be approximated by its linearization around the level~1,
$f' = (1-f)$. However, we are loosing something in each approximation, and the rigorous proof requires to work a bit. Nevertheless, since it analogous to the one of \cite[Lemma 3.2]{Du-Quiros-Zhou:art}, we only sketch it.

We consider the change of variables $v=f^m$. Hence,  $v$ satisfies the equation
$$
\frac{1}{m}v^{1/m - 1}v_\tau = kv_{yy} + \tfrac{1}{m-1}(v^{1/m} - v).
$$
Notice that $(1-\varepsilon)^m\le v<1$. In particular, we can make $v$ arbitrarily close to 1, by taking $\varepsilon$ close to 0. As a consequence, if $\varepsilon$ is small enough (how small depending only on $m$),
$$
v^{1/m} - v \geq \frac{m-1}{2m}(1-v),
$$
so that $v$ is a supersolution to
\begin{equation}
\label{eq:v.supersolution}
\frac{1}{m}v^{1/m - 1}v_\tau= kv_{yy} + \frac{1}{2m}(1-v).
\end{equation}
Let $\psi$ be the solution to
$$
\begin{cases}
		\displaystyle\psi_\tau = \frac{km}{2}\psi_{yy} + \frac14(1-\psi),&(y,\tau)\in (-A,A)\times \RR_+,\\
		\psi(y,0)=1-\varepsilon,&y\in  [-A,A],\\
		\psi(\pm A,\tau)=1-\varepsilon,&\tau \in \RR_+.
	\end{cases}
$$
Solutions of this problem are known to converge exponentially to 1 in $[-B,B]$,
$$
\psi(y,\tau)\geq 1-\delta e^{-\lambda\tau}\text{ for all }y\in[-B, B],\ \tau>T, \quad\text{for some constants }\delta,\lambda>0,\ T\ge0,
$$
see  \cite[Lemma 2.6]{Du-Polacik}. Therefore, if we prove that $\psi$ is a subsolution to~\eqref{eq:v.supersolution} we are done.

On the one hand, $1-\varepsilon\le \psi<1$, so that $1\ge \psi^{\frac1m-1}/2$, since $\varepsilon\in(0,1/2)$.
On the other hand,  $g=\psi_\tau$ satisfies
$$
\begin{cases}
		\displaystyle g_\tau = \frac{km}{2}g_{yy} - \frac{g}4,&(y,\tau)\in (-A,A)\times \RR_+,\\
		\displaystyle g(y,0)=\frac\varepsilon4,&y\in  [-A,A],\\
		g(\pm A,\tau)=0,&\tau \in \RR_+,
	\end{cases}
$$
where we have used the equation satisfied by $\psi$ to compute the initial value of $g$. Hence, a direct comparison argument shows that $\psi_\tau\ge 0$. Combining both things,
$$
\displaystyle\frac{\psi^{\frac1m-1}}2\psi_\tau\le \psi_\tau = \frac{km}{2}\psi_{yy} + \frac14(1-\psi),
$$
from where we immediately  get, multiplying by $2/m$, that $\psi$ is a subsolution to~\eqref{eq:v.supersolution}, as desired.
\end{proof}
\begin{cor}\label{cor:speed of convergence}
Let $v$ be the nonnegative weak solution to problem \eqref{eq:InDataTrasProb}--\eqref{eq:REACTIONTRANSFORMATION}. Then there exist $\delta, \lambda > 0$ and $T\ge0$ such that
\[
\big(1-\delta e^{-\lambda(\tau-T)}\big)\Phi(z)\leq v(z, 0, \tau) \quad \text{ in } D\times [2T,\infty),
\]
where $\Phi = \Phi(z)$ is the unique nonnegative weak solution to the stationary problem \eqref{eq:STATIONARYPROBLEM}.
\end{cor}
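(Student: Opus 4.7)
The strategy is to construct an explicit parabolic subsolution to \eqref{eq:REACTIONTRANSFORMATION} on a finite cylinder of the form $D\times(-A,A)\times(T,\infty)$ whose trace at $y=0$ already converges to $\Phi(z)$ exponentially fast, and then to invoke the parabolic comparison principle. Fix $\varepsilon\in(0,\varepsilon_*(m))$, with $\varepsilon_*$ as in Lemma \ref{lem:concavity_mantained}, set $k:=\|\Phi\|_{L^\infty(D)}^{m-1}$, choose any $A>0$, and let $f=f(y,\sigma)$ be the associated solution to \eqref{eq:SubSolExpConvFinal}. Proposition \ref{LEMMACONVERGENCEINBOUDEDDOMAINS} applied with this pair $(A,\varepsilon)$ produces a threshold $T\ge0$ such that $v(z,y,\tau)\ge(1-\varepsilon)\Phi(z)$ for every $(z,y)\in D\times[-A,A]$ and every $\tau\ge T$. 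The candidate subsolution is then
\[
V(z,y,\tau):=\Phi(z)\,f(y,\tau-T),\qquad (z,y,\tau)\in D\times[-A,A]\times[T,\infty).
\]

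To verify that $V$ is a subsolution, I would use the stationary equation $\Delta_z\Phi^m=-\tfrac{1}{m-1}\Phi$ from \eqref{eq:STATIONARYPROBLEM} to compute
\[
\Delta V^m+\tfrac{V}{m-1}=\tfrac{\Phi}{m-1}(f-f^m)+\Phi^m(f^m)_{yy},\qquad \partial_\tau V=\Phi\,f_\tau,
\]
and then substitute the equation satisfied by $f$. After dividing by $\Phi>0$ in $D$, the required inequality $\partial_\tau V\le\Delta V^m+\tfrac{V}{m-1}$ reduces to the pointwise condition
\[
\bigl(k-\Phi^{m-1}(z)\bigr)\,(f^m)_{yy}(y,\tau-T)\le 0,
\]
which holds thanks to the choice of $k$ combined with the concavity estimate $(f^m)_{yy}\le 0$ supplied by Lemma \ref{lem:concavity_mantained}(i). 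This is the crux of the argument, and the main reason why part (i) of that lemma is needed: without the sign information on $(f^m)_{yy}$ one could not tune the single constant $k$ in the one-dimensional model equation to dominate the $z$-dependent diffusion coefficient $\Phi^{m-1}(z)$ coming from the genuinely multi-dimensional PME.

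It remains to match data on the parabolic boundary of the cylinder $D\times[-A,A]\times[T,\infty)$. By construction $V=\Phi(z)(1-\varepsilon)$ both at $\tau=T$ and on $D\times\{\pm A\}\times[T,\infty)$, and in both cases $V\le v$ by our choice of $T$ via Proposition \ref{LEMMACONVERGENCEINBOUDEDDOMAINS}; on the lateral boundary $\partial D\times(-A,A)\times[T,\infty)$ one has $V=0=v$ because $\Phi$ vanishes on $\partial D$. Parabolic comparison then yields $v(z,y,\tau)\ge\Phi(z)\,f(y,\tau-T)$ throughout the cylinder. Specialising to $y=0$ and applying Lemma \ref{lem:concavity_mantained}(ii) with, say, $B=0$, furnishes constants $\delta,\lambda>0$ and a threshold $T_1\ge0$ such that $f(0,\sigma)\ge 1-\delta e^{-\lambda\sigma}$ for $\sigma\ge T_1$. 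Replacing $T$ by $\max(T,T_1)$ if necessary, the estimate $v(z,0,\tau)\ge\bigl(1-\delta e^{-\lambda(\tau-T)}\bigr)\Phi(z)$ holds as soon as $\tau-T\ge T$, i.e., on $D\times[2T,\infty)$, which is exactly the bound claimed in the corollary.
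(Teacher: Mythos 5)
Your proposal is correct and follows essentially the same route as the paper's own proof: the same product subsolution $\Phi(z)f(y,\tau-T)$ with $k=\|\Phi\|_{L^\infty(D)}^{m-1}$, the same reduction of the subsolution inequality to $(k-\Phi^{m-1})(f^m)_{yy}\le 0$ via Lemma \ref{lem:concavity_mantained}(i), the same use of Proposition \ref{LEMMACONVERGENCEINBOUDEDDOMAINS} to order the data on the parabolic boundary, and the same appeal to Lemma \ref{lem:concavity_mantained}(ii) at $y=0$. Your write-up is, if anything, slightly more explicit than the paper about the time shift that accounts for the interval $[2T,\infty)$ in the statement.
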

\begin{proof}
Let us fix a domain $D\times (-A,A)\times [T,\infty)$, for $A,T>0$ that we will choose properly. To prove our statement, we compare the solution $v$ with a barrier from below defined as
$$
w(z,y,\tau):=f(y, \tau)\Phi(z),
$$
where $f$ satisfies \eqref{eq:SubSolExpConvFinal} with $k = \|\Phi\|_{L^\infty(D)}^{m-1}$. In order to do that, we need to check that $w$ is a subsolution of the problem, that $w$ and $v$ are ordered at the initial time and on the lateral boundary $\partial(D\times (-A,A))\times (T,\infty)$, and apply the comparison principle as above.

Given $\vep \in (0,1/2)$, Proposition \ref{LEMMACONVERGENCEINBOUDEDDOMAINS} implies that for every $A > 0$, there exists $T>0$, which may be assumed to be bigger than the value of $T$ in~\eqref{eq:exponential.convergence}, such that
$$
v(z,y,\tau)\geq (1-\varepsilon)\Phi(z) \quad \text{ in }    D \times [-A,A]\times [T,\infty).
$$
It is clear then, from Lemma \ref{lem:concavity_mantained}, that $w\leq v$ in the parabolic boundary of the problem. Next, we compute
$$
w_\tau - \Delta w^m - \frac{w}{m-1}= \Phi\left[f_\tau-\Phi^{m-1}(f^m)_{yy} -\frac{f-f^m}{m-1}\right].
$$
Take $k = \|\Phi\|_{L^\infty(D)}^{m-1}$. Since $(f^m)_{yy} \leq 0$, we have that
$$
-\Phi^{m-1}(f^m)_{yy}\leq -k(f^m)_{yy}
$$
and one readily finds that
$$
w_\tau - \Delta w^m - \frac{w}{m-1} \leq  \Phi\left[f_\tau-k(f^m)_{yy} -\frac{f-f^m}{m-1}\right]= 0
$$
and thus $w$ is a subsolution in $D\times [-A,A]\times [T,\infty)$. By comparison we obtain
$$
w(z,y,\tau-T)\leq v(z,y,\tau) \quad \text{ in }  D \times (-A,A)\times (T,\infty),
$$
and so our statement follows in light of \eqref{eq:exponential.convergence}.
\end{proof}

%
%
%
%
%

%
%
%
%
%
%
%
%
%
%
%
\section{Proof of Theorem \ref{thm:LongTimeBeh} when $D$ is star-shaped}\label{Section:ProofMainTheorem1}
We are ready to proceed with the proof of Theorem \ref{thm:LongTimeBeh}. We begin by assuming that the domain $D$ is star-shaped with pole at the origin $O$: thanks to this assumption, the dilation $\lambda D$ satisfies $D \subset \lambda D$ for $\lambda > 1$, and we can accordingly perturb the TW \eqref{eq:AnsatzTW} to construct a subsolution for \eqref{eq:InDataTrasProb}--\eqref{eq:REACTIONTRANSFORMATION}. We mention that this proof does not use Corollary \ref{cor:speed of convergence}, but only Proposition \ref{LEMMACONVERGENCEINBOUDEDDOMAINS}.

\begin{proof}[Proof of Theorem \ref{thm:LongTimeBeh} ($D$ star-shaped)] Assume that $D$ is star-shaped with pole $O$. Let $u_0$ be nontrivial satisfying \eqref{eq:AssInitialData}, $v_0$ satisfying \eqref{eq:InDataTrasProb}, $v = v(z,y,\tau)$ the corresponding solution to \eqref{eq:REACTIONTRANSFORMATION} and $c_\ast > 0$ as in Theorem \ref{thm:ExTWsVaz}.

We first prove the following claim: for every $\vep \in (0,1)$ and every $c \in (0,c_{\ast})$, there exists $T > 0$ depending only on $m$, $D$, $u_0$, $c$ and $\vep$, such that
\begin{equation}\label{eq:WaveConvLowerBound}
v(z,y,\tau) \geq (1-\vep) \Phi(z) \quad \text{ in } D \times [0,c\tau] \times [T,\infty).
\end{equation}
The proof of \eqref{eq:WaveConvLowerBound} is the hardest part and it is divided in several steps as follows.

\noindent\emph{Step 1.} Let us fix $\varepsilon \in (0,1)$ and define
\[
D_\varepsilon := \lambda_\vep D, \qquad   \lambda_\varepsilon := (1-\varepsilon)^{-\frac{m-1}{2}} > 1.
\]
As mentioned above, since $D$ is star-shaped, $D \subset D_{\varepsilon}$. Now, let $\Phi_{\varepsilon} = \Phi_{\varepsilon}(z)$ be the unique nonnegative weak solution to \eqref{eq:STATIONARYPROBLEM} posed $D_{\varepsilon}$ and let
\[
\tilde{\Phi}(z) := (1-\vep) \Phi_{\varepsilon}(\lambda_\vep z), \quad z \in D.
\]
Then, using the equation of $\Phi_\vep$ and the definition of $\lambda_\vep$, we have
\[
-\Delta \tilde{\Phi}^m(z) = - (1-\vep)^m \lambda_\vep^2 \Delta \Phi_\vep^m(\lambda_\vep z) = \tfrac{(1-\vep)^m \lambda_\vep^2}{m-1} \Phi_\vep(\lambda_\vep z) = \tfrac{(1-\vep)^{m-1} \lambda_\vep^2}{m-1} \tilde{\Phi}(z) = \tfrac{1}{m-1} \tilde{\Phi}(z) \quad \text{ in } D,
\]
and thus, since $\Phi = \tilde{\Phi} = 0$ in $\partial D$ (this follows since $\Phi_\vep = 0$ in $\partial D_\vep$), we obtain
\begin{equation}\label{eq:WaveConvPhiPhivep}
\Phi(z) = (1-\vep) \Phi_{\varepsilon}(\lambda_\vep z) \quad \text{ in } D,
\end{equation}
by uniqueness. Now, let us write
\[
1 - \varepsilon = \frac{1-a_{\varepsilon}}{1-b_{\varepsilon}}, \qquad  \quad  a_{\varepsilon} := \frac{\varepsilon(3-\varepsilon)}{2}, \quad b_{\varepsilon} = \frac{\vep}{2}.
\]
Notice that $a_{\varepsilon},b_{\varepsilon} \in (0,1)$ with $a_{\varepsilon},b_{\varepsilon} \to 0$ as $\varepsilon \to 0^+$ and, in addition, $a_{\varepsilon} \in (\vep,1)$ and $b_{\varepsilon} \in (0,a_{\varepsilon})$. In particular, as a consequence of \eqref{eq:WaveConvPhiPhivep}, it follows
\begin{equation}\label{eq:FUNDAMENTALINEQSTEADYSTATES}
\left(1 - b_{\varepsilon}\right) \Phi(z) = (1 - a_{\varepsilon}) \Phi_{\varepsilon}(\lambda_\varepsilon z), \quad z \in D.
\end{equation}
Further, by Proposition \ref{LEMMACONVERGENCEINBOUDEDDOMAINS} (with $A = 1$), there is $\tau_{\varepsilon} > 0$ (depending only on $m$, $D$, $u_0$ and $\vep$)  such that
\begin{equation}\label{eq:SECONDFUNDAMENTALINEQSTEADYSTATES}
v(z,y,\tau+\tau_{\varepsilon}) \geq \left(1 - b_{\varepsilon}\right)\Phi(z) \quad  \text{ in }   D\times[0,1)\times(0,\infty).
\end{equation}

\noindent\emph{Step 2.} To prove \eqref{eq:WaveConvLowerBound}, we build a subsolution of wave type. Our candidate is the function
\[
\psi(z,y,\tau) := (1 - a_\vep)\,\varphi_\vep(\lambda_\vep z,\lambda_\vep y - c_\vep\tau + \ell), \quad z \in D, \; y,\tau \geq 0, \; \ell \geq 0,
\]
where $\varphi_\vep = \varphi_\vep(z,\xi)$ is the nonnegative weak solution to
\[
\begin{cases}
\Delta \varphi_\vep^m + c_{\vep\ast}\partial_\xi \varphi_\vep + \frac{\varphi_\vep}{m-1} = 0  \quad &\text{ in } \Omega_\vep, \\
\varphi_\vep = 0 \quad &\text{ in } \partial\Omega_\vep,
\end{cases}
\]
satisfying
\begin{equation}\label{eq:WaveConvPropphivep}
\partial_\xi \varphi_\vep \leq 0,   \qquad    \lim_{\xi \to -\infty} \, \sup_{z\in D} \, \frac{\varphi_\vep(z,\xi)}{\Phi_\vep(z)} = 1,   \qquad   \varphi_\vep(z,\xi) = 0 \quad \text{for all }z\in D,\ \xi \geq \xi_\vep,
\end{equation}
for some $\xi_\vep \in \RR$ depending only on $m$, $D$ and $\vep$ (cf. Theorem \ref{thm:ExTWsVaz}). The speed $c_{\vep\ast} > 0$ is the unique number for which the above problem has a nontrivial weak solution, while $\Omega_{\varepsilon} := D_{\varepsilon} \times (0,\infty)$. The speed $c_\vep > 0$ is defined as
\begin{equation}\label{eq:WaveConvDefCvep}
c_\vep := c_\ast \left(\frac{1 - a_\vep}{1 - \varepsilon}\right)^{m-1}.
\end{equation}
Since $\varepsilon < a_\vep$, we have $c_\vep < c_\ast$ with $c_\vep \to c_{\ast}$ as $\varepsilon \to 0$ (this follows since $a_\vep \to 0$ as $\vep \to 0$). Let us fix $c \in (0,c_\ast)$. In light of the above observations and recalling that $\lambda_\vep \to 1$ as $\vep \to 0$, we deduce the existence of $\vep_0 \in (0,1)$ small enough depending on $c$, such that
\begin{equation}\label{eq:WavePropDefcvep}
c < \lambda_\vep c < c_\vep < c_{\ast}, \quad \forall \, \vep \in (0,\vep_0).
\end{equation}
From now on, we fix $\vep \in (0,\vep_0)$. Notice that, since $D \subset D_\vep$, we have $c_\ast < c_{\vep\ast}$ with $c_{\vep\ast} \to c_\ast$ as $\vep \to 0$ (cf.  \cite[Corollary 5.5]{Vazquez2007:art}\footnote{The speed $c_\ast$ is a monotone and continuous function of the domain.}) and so $\dd_\vep := c_{\vep\ast} - c_\ast > 0$ for every $\vep \in (0,1)$, with $\dd_\vep \to 0$ as $\vep \to 0$.

Now, since $\partial_\xi\varphi_\vep \leq 0$, we have by direct differentiation and \eqref{eq:WaveConvDefCvep}:
\begin{equation}\label{eq:WaveConvSubSolDt}
\begin{aligned}
\partial_\tau \psi &= -c_\vep(1-a_\vep) \partial_\xi \varphi_\vep = - c_\ast \frac{(1-a_\vep)^m}{(1-\vep)^{m-1}} \partial_\xi \varphi_\vep\\
& = -c_{\vep\ast} \frac{(1-a_\vep)^m}{(1-\vep)^{m-1}} \partial_\xi \varphi_\vep + \dd_\vep \frac{(1-a_\vep)^m}{(1-\vep)^{m-1}} \partial_\xi \varphi_\vep
&\leq -c_{\vep\ast} \frac{(1-a_\vep)^m}{(1-\vep)^{m-1}} \partial_\xi \varphi_\vep,
\end{aligned}
\end{equation}
where we have also used the definition of $\dd_\vep$. On the other hand, by definition of $\lambda_\vep$,
\[
\Delta \psi^m = (1-\vep)^m \lambda_\vep^2 \Delta \varphi_\vep^m = \frac{(1-a_\vep)^m}{(1-\vep)^{m-1}} \Delta \varphi_\vep^m,
\]
and thus, combining \eqref{eq:WaveConvSubSolDt} with the equation of $\varphi_\vep$ and recalling that $a_\vep > \vep$, it follows
\[
\begin{aligned}
\partial_\tau\psi - \Delta \psi^m - \tfrac{1}{m-1}\psi &\leq -c_{\vep\ast} \frac{(1-a_\vep)^m}{(1-\vep)^{m-1}} \partial_\xi \varphi_\vep - \frac{(1-a_\vep)^m}{(1-\vep)^{m-1}} \Delta \varphi_\vep^m - \tfrac{1}{m-1} (1-a_\vep) \varphi_\vep \\
&\leq - \frac{(1-a_\vep)^m}{(1-\vep)^{m-1}} \big( c_{\vep\ast} \partial_\xi \varphi_\vep + \Delta \varphi_\vep^m + \tfrac{1}{m-1} \varphi_\vep \big) = 0 \quad \text{ in } D \times [0,\infty) \times (0,\infty).
\end{aligned}
\]
Since $\psi = 0$ in $\partial D \times [0,\infty)\times(0,\infty)$ by construction, $\psi$ is a subsolution, as claimed at the beginning of this step.

\noindent\emph{Step 3.} On the other hand, we have
\[
(1 - a_\vep)\Phi_\vep(\lambda_\vep z) \geq \psi(z,y,\tau) \quad \text{ in } D \times [0,\infty)\times(0,\infty),
\]
by construction and the upper bound in \eqref{eq:CCDoubleBound} and thus, by \eqref{eq:FUNDAMENTALINEQSTEADYSTATES} and \eqref{eq:SECONDFUNDAMENTALINEQSTEADYSTATES}, we obtain
\begin{equation}\label{eq:THIRDFUNDAMENTALINEQSTEADYSTATES}
v(z,y,\tau+\tau_{\varepsilon}) \geq \psi(z,y,\tau) \quad  \text{ in }   D\times[0,1)\times(0,\infty).
\end{equation}
Further, since the support of $\varphi_\vep$ is bounded to the right, we may choose $\ell = \ell_{\varepsilon}$ large enough such that
\[
v(z,y,\tau_{\varepsilon}) \geq \psi(z,y,0) \quad \text{ in } D \times [0,\infty).
\]
Moreover, by \eqref{eq:THIRDFUNDAMENTALINEQSTEADYSTATES} we easily get $v(z,0,\tau+\tau_{\varepsilon}) \geq \underline{v}(z,0,\tau)$ for any $z \in D$, $\tau \geq 0$ and, since $\psi = 0$ in $\partial D \times [0,\infty)\times [0,\infty)$, we deduce by comparison
\[
v(z,y,\tau + \tau_\vep) \geq (1 - a_\vep)\,\varphi_\vep(\lambda_\vep z,\lambda_\vep y - c_\vep \tau + \ell_\vep) \quad \text{ in } D \times [0,\infty)\times [0,\infty).
\]
Now, in view of the first limit in \eqref{eq:WaveConvPropphivep}, we know that there is $\overline{\xi}_\vep \in \RR$ such that
\[
\varphi_\vep(\lambda_\vep z,\xi) \geq (1 - \tfrac{\vep}{2})\Phi_\vep(\lambda_\vep z) \quad \text{ in } D \times (-\infty,\overline{\xi}_\vep],
\]
and thus, combining the above two inequalities with \eqref{eq:FUNDAMENTALINEQSTEADYSTATES} and the definition of $b_\vep$, we obtain
\begin{equation}\label{eq:WaveConvAlmostLowBound}
v(z,y,\tau + \tau_\vep) \geq (1 - \tfrac{\vep}{2})(1 - a_\vep) \Phi_\vep(\lambda_\vep z) = (1 - \tfrac{\vep}{2})^2 \Phi(z) \geq (1-\vep)\Phi(z),
\end{equation}
for every $z \in D$ and every $y,\tau \geq 0$ satisfying $\lambda_\vep y - c_\vep \tau + \ell_\vep \leq \overline{\xi}_\vep$. Finally, by the second inequality in \eqref{eq:WavePropDefcvep}, whenever $y \leq c\tau$, we have
\[
\lambda_\vep y - c_\vep \tau + \ell_\vep \leq -(c_\vep - \lambda_\vep c) \tau + \ell_\vep \leq \overline{\xi}_\vep,
\]
for every $\tau \geq \overline{\tau}_\vep := \tfrac{\ell_\vep - \overline{\xi}_\vep}{c_\vep - \lambda_\vep c}$, and \eqref{eq:WaveConvLowerBound} follows by \eqref{eq:WaveConvAlmostLowBound}, choosing $T := \tau_\vep + \overline{\tau}_\vep$.

\noindent\emph{Step 4.} Now, we fix $c > c_\ast$ and prove \eqref{eq:LTAOuterBeh}. Let $\varphi$ the unique wave solution to \eqref{eq:REACTIONTRANSFORMATION} with speed $c_\ast$ satisfying \eqref{eq:Propphi} and let
\[
\overline{\psi}(z,y,\tau) = \varphi(z,y - c_{\ast}\tau - \ell), \quad z \in D, \; y,\tau \geq 0, \; \ell \geq 0.
\]
By \eqref{eq:CCBAbovet0}, we have
\[
v_0(z,y) := t_0^{\frac{1}{m-1}} u_0(z,y) \leq \bigg( \frac{t_0}{\overline{t}} \bigg)^{\frac{1}{m-1}} \Phi(z) \quad \text{ in } \Omega,
\]
for some $\overline{t} > 0$ depending only on $m$, $D$ and $u_0$. Thus, choosing $t_0 \leq 2^{-(m-1)}\overline{t}$, we obtain $v_0(z,y) \leq \tfrac{1}{2} \Phi(z)$ in $\Omega$. Consequently, in view of \eqref{eq:Propphi}, we may choose $\ell \geq 0$ large enough so that $\overline{\psi}(z,y,0) = \varphi(z,y - \ell) \geq v_0(z,y)$ in $\Omega$, and so the comparison principle yields
\[
\overline{\psi}(z,y,\tau) = \varphi(z,y - c_{\ast}\tau - \ell) \geq v(z,y,\tau) \quad \text{ in } \Omega \times(\tau_0,\infty),
\]
where $\tau_0 := \ln t_0$ according to \eqref{eq:InDataTrasProb} (notice that the comparison at the boundary $\partial\Omega\times(0,\infty)$ is immediate since $v = \overline{\psi} = 0$ there). Recalling that $\varphi(z,\xi) = 0$ in $D\times[\xi_0,\infty)$ for some $\xi_0 \in \RR$, we immediately see that
\begin{equation}\label{eq:WaveConvOuter}
0 \leq v(z,y,\tau) \leq \varphi(z,y - c_{\ast}\tau - \ell) = 0 \quad \text{ in } D \times [c\tau,\infty) \times [T,\infty),
\end{equation}
where $T := \tfrac{\xi_0 + \ell}{c - c_\ast} > 0$.

\noindent\emph{Step 5.} To conclude the proof, it is sufficient to repeat the above arguments working in $D \times (-\infty,0]$ and using the reflected wave solutions instead of $\varphi$ (cf. \eqref{eq:PropPhiReflected}): in the very same way, we obtain that \eqref{eq:WaveConvLowerBound} holds true in $D \times [-c\tau,0] \times [T,\infty)$ whenever $c < c_\ast$, while \eqref{eq:WaveConvOuter} in $D \times (-\infty,-c\tau] \times [T,\infty)$, for every $c > c_\ast$.
\end{proof}
%
%
%
%
%
%
%
%
%
%
%
%
\section{Proof of Theorem \ref{thm:LongTimeBeh} when $D$ is a bounded domain}\label{Section:ProofMainTheorem1bis}
In this section we present the proof of Theorem \ref{thm:LongTimeBeh} in its full generality, that is, dropping the assumption that $D$ is star-shaped. To do so, we construct new barriers by perturbing the TW solution in \eqref{eq:AnsatzTW}. Inspired by the work of Bir\'o \cite{Biro2002:art} (see also~\cite{Garriz2018:art}), the main idea is to look for solutions to \eqref{eq:REACTIONTRANSFORMATION} with form
\begin{equation}\label{eq:ApproxTW}
w(z,y,\tau) := f(\tau) \varphi(z,y-g(\tau)),
\end{equation}
where $\varphi$ is the wave profile satisfying \eqref{eq:WaveEqcast}, while $f:\RR_+ \to \RR_+$ and $g:\RR \to \RR$ are smooth functions satisfying $f(0) = f_0$, $g(0)=g_0$. The functions $f$ and $g$ will be suitably chosen to build a subsolution and a supersolution, respectively. In this perspective, we plug the \emph{ansatz} \eqref{eq:ApproxTW} into the equation in \eqref{eq:REACTIONTRANSFORMATION}: a direct computation and \eqref{eq:WaveEqcast} show that
\begin{equation}\label{eq:SignOperBarriers}
\begin{aligned}
\partial_\tau w - \Delta w^m - \tfrac{1}{m-1} w &= f'(\tau) \varphi - f(\tau)g'(\tau) \partial_\xi \varphi - f^m(\tau) \Delta \varphi^m - \tfrac{f(\tau)}{m-1} \varphi \\
&=
f'(\tau) \varphi - f(\tau)g'(\tau) \partial_\xi \varphi + c_\ast f^m(\tau) \partial_\xi \varphi + \tfrac{f^m(\tau)}{m-1} \varphi - \tfrac{f(\tau)}{m-1} \varphi \\
&= f(\tau) \partial_\xi \varphi \left[ c_\ast f^{m-1}(\tau) - g'(\tau) \right] + \varphi \left[ f'(\tau) + \tfrac{f^m(\tau)}{m-1} - \tfrac{f(\tau)}{m-1} \right].
\end{aligned}
\end{equation}
With this computation in mind, we proceed in two main steps. As already anticipated, we construct a subsolution and a supersolution using ODEs techniques, and then we use them to trap the solution to \eqref{eq:InDataTrasProb}--\eqref{eq:REACTIONTRANSFORMATION}. The crucial fact is that we can choose the functions $f$ and $g$ in such a way
\[
f(\tau) \sim 1, \quad g(\tau) - c_\ast \tau \sim c_0 \quad \text{ for } \tau \sim +\infty,
\]
for some constant $c_0$ depending only on $m$, $D$, $f_0$ and $g_0$. Thanks to these properties, the statement of Theorem \ref{thm:LongTimeBeh} will easily follow.

\begin{rem}\label{rem:ReflectedBarriers}
It is important to mention that the arguments below can be easily adapted to construct barriers with form \eqref{eq:ApproxTW} traveling towards the bottom-end of the tube by perturbing the reflected wave solution (cf. \eqref{eq:PropPhiReflected}) instead of $\varphi$.
\end{rem}
%
%
%
%
%
%
\subsection*{Construction of a subsolution.} To build a subsolution, we consider the solutions $f$ and $g$ of the following Cauchy problems
\begin{equation}\label{eq:SubSolSystem}
\begin{cases}
f' = \tfrac{f}{m-1}(1 - \delta(\tau) - f^{m-1}), \  &\tau >0, \\
f(0) = f_0
\end{cases}
\qquad\qquad
\begin{cases}
g' = c_\ast f^{m-1}, \ &\tau >0, \\
g(0) = g_0,
\end{cases}
\end{equation}
where $f_0 \in (0,1)$ and $g_0 \in \RR$ will be suitably chosen and
\begin{equation}\label{eq:Defdelta}
\delta(\tau):= \frac{2(m-1)\delta_0}{(1+\tau)^2},
\end{equation}
for some $\delta_0 \in (0,1)$.

\begin{lem}\label{lem:PropfSubsol}
Let $m > 1$, $f_0 \in (0,1)$ and let $f$ satisfying \eqref{eq:SubSolSystem}, with $\dd$ as in \eqref{eq:Defdelta}. Then, for every $\dd_0 > 0$, we have
\begin{equation}\label{eq:LiminftyfSub}
\lim_{\tau\to+\infty} f(\tau) = 1.
\end{equation}
Further, there exists $\overline{\dd} \in (0,1)$ depending only on $m$ and $f_0$ such that such that for every $\dd_0 \in (0,\overline{\dd})$
\begin{equation}\label{eq:BoundOnfSub}
f(\tau) \leq 1 - \frac{\dd(\tau)}{2(m-1)}, \quad \forall\tau \geq 0.
\end{equation}
\end{lem}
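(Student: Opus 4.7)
The plan is to treat the two assertions separately: \textbf{(i)} by reducing the ODE to a linear equation via an explicit change of variables, and \textbf{(ii)} by constructing $F(\tau):=1-\delta(\tau)/[2(m-1)]=1-\delta_0/(1+\tau)^2$ as a strict supersolution and invoking the ODE comparison principle.

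For (i), I would set $u := f^{m-1}$, which converts the ODE into the logistic--Bernoulli equation $u' = u(1 - \delta(\tau) - u)$; the further substitution $v := 1/u$ (well defined because $f_0>0$ and $f\equiv 0$ is an equilibrium, so $f$ stays positive) linearizes this to
$$v' + (1 - \delta(\tau))\,v = 1, \qquad v(0) = f_0^{-(m-1)}.$$
Using the explicit primitive $\int_0^\tau \delta(s)\,ds = 2(m-1)\delta_0\tau/(1+\tau)$, the integrating factor $\mu(\tau):=\exp\!\bigl(\int_0^\tau(1-\delta(s))\,ds\bigr) = e^{\tau - 2(m-1)\delta_0\tau/(1+\tau)}$ diverges as $\tau\to+\infty$. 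From the explicit expression
$$v(\tau) = \mu(\tau)^{-1}\!\left[\,v(0)+\int_0^\tau \mu(s)\,ds\,\right]$$
and L'H\^opital's rule, $\lim_{\tau\to+\infty}v(\tau) = \lim_{\tau\to+\infty}1/(1-\delta(\tau)) = 1$, so $f(\tau) = v(\tau)^{-1/(m-1)} \to 1$, proving \eqref{eq:LiminftyfSub} for every $\delta_0 > 0$.

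For (ii), I would show that $F(\tau)=1-\delta_0/(1+\tau)^2$ is a strict supersolution as soon as $\delta_0$ is small in terms of $m$. Writing $\epsilon=\epsilon(\tau):=\delta_0/(1+\tau)^2$, so that $F=1-\epsilon$ and $\delta(\tau)=2(m-1)\epsilon$, one has $F'(\tau)=2\epsilon/(1+\tau)>0$. Second-order Taylor with Lagrange remainder at $\epsilon=0$ yields a constant $C_m>0$ depending only on $m$ with
$$1-(1-\epsilon)^{m-1} \;\le\; (m-1)\epsilon + C_m\,\epsilon^2, \qquad \epsilon\in(0,1/2),$$
the Lagrange factor $(1-\xi)^{m-3}$ being bounded by $\max\{1,2^{3-m}\}$ on this range. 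Hence
$$1-\delta(\tau)-F(\tau)^{m-1} \;\le\; -(m-1)\epsilon + C_m\epsilon^2 \;=\; -\epsilon\bigl[(m-1)-C_m\epsilon\bigr],$$
which is strictly negative for every $\tau\ge 0$ whenever $\delta_0 < (m-1)/(2C_m)$. Therefore $\tfrac{F}{m-1}(1-\delta-F^{m-1}) < 0 < F'$, i.e.\ $F$ is a strict supersolution; combined with the initial ordering $F(0)=1-\delta_0 \ge f_0$ (automatic if $\delta_0 \le 1-f_0$), the standard comparison principle for ODEs with locally Lipschitz right-hand side yields $f(\tau)\le F(\tau)$ for all $\tau\ge 0$. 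Choosing
$$\overline{\delta} := \min\!\left\{\,1-f_0,\ \tfrac{m-1}{2C_m}\,\right\}$$
(which depends only on $m$ and $f_0$) establishes \eqref{eq:BoundOnfSub}.

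The only mildly technical step is the Taylor estimate for $(1-\epsilon)^{m-1}$, which must be set up carefully when $1<m<3$ because the Lagrange factor $(1-\xi)^{m-3}$ is unbounded as $\xi\to 1$; restricting to $\epsilon\in(0,1/2)$, which is automatic if $\delta_0<1/2$ (and this may be absorbed into $\overline{\delta}$), keeps all constants finite and $m$-dependent only. With that inequality in hand, both the supersolution verification and the comparison argument are routine.
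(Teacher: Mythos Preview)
Your proof is correct. Part~(ii) follows the paper's argument almost exactly: the same barrier $F(\tau)=h(\tau)=1-\delta_0/(1+\tau)^2$ is shown to be a strict supersolution, and the only difference is cosmetic---the paper secures the key sign via the inequality $(1-\tilde\delta)^{m-1}\ge 1-2(m-1)\tilde\delta$ for small $\tilde\delta$, whereas you obtain the equivalent bound from Taylor with Lagrange remainder.

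Part~(i), however, is handled by a genuinely different route. The paper argues qualitatively: it first traps $0<f<1$ by comparison with the constant sub/supersolutions $0$ and $1$, then integrates the ODE to write
\[
(m-1)\ln\bigl(f(\tau)/f_0\bigr)=\int_0^\tau\bigl(1-f^{m-1}(s)\bigr)\,\mathrm{d}s-\int_0^\tau\delta(s)\,\mathrm{d}s,
\]
and observes that both integrals are nondecreasing with the second bounded, forcing the left-hand side to converge; the limit must then be $1$ by passing to the limit in the equation. Your approach is instead entirely computational: the Bernoulli substitution $v=f^{-(m-1)}$ linearizes the ODE, and L'H\^opital applied to the integrating-factor formula gives $v\to 1$ directly. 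Your method has the advantage of yielding an explicit representation of $f$ (useful if one later wants rates), and it sidesteps the need to first establish $f<1$; the paper's argument is shorter and stays within elementary comparison/monotonicity reasoning, which matches the style of the surrounding lemmas. Both are equally valid here.
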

\begin{proof} We first notice that $0 < f(\tau) < 1$, for all $\tau > 0$. This follows from the comparison principle for first order ODEs: the assumptions $f_0 \in (0,1)$ and $\dd(\tau) > 0$ imply that the constants $0$ and $1$ are sub and supersolutions, respectively.

To check the validity of \eqref{eq:LiminftyfSub} it is enough to prove that the limit $\lim_{\tau \to +\infty}f(\tau) := \ell \in (0,1]$ exists (the fact that $\ell = 1$ follows by passing to the limit into the equation and using that $\dd(\tau) \to 0$ as $\tau \to +\infty$). With this goal in mind, we integrate the equation of $f$ to deduce
\begin{equation}\label{eq:ExLimfSub}
(m-1) \ln (f(\tau)/f_0) = \int_0^\tau 1 - f^{m-1}(s) \, \rd s - \int_0^\tau \dd(s) \, \rd s,
\end{equation}
and thus, since both the integral functions in the r.h.s. are non-decreasing and the second is bounded by definition of $\dd$, it follows that $\ell \in (0,1]$ exists.

To prove \eqref{eq:BoundOnfSub}, we construct a supersolution for $f$. We consider the function
\[
h(\tau) := 1 - \frac{\dd_0}{(1+\tau)^2} = 1 - \frac{\dd(\tau)}{2(m-1)},
\]
and we fix $\overline{\dd} \in (0,1)$ (depending only on $m$ and $f_0$) such that for every $\dd_0 \in (0,\overline{\dd}]$, we have $h(0) = 1 - \dd_0 \geq 1 - \overline{\dd} \geq f_0$ and
\[
(1 - \tilde{\dd})^{m-1} \geq 1 - 2(m-1)\tilde{\dd}, \quad \forall \tilde{\dd} \in (0,\dd_0].
\]
Then, using the above relation and the definition of $\dd$, we obtain
\[
\begin{aligned}
h' - \frac{h}{m-1}(1 - \delta(\tau) - h^{m-1}) &= 2\dd_0(1+\tau)^{-3} + \frac{h}{m-1}\Big\{ \dd(\tau) + [1- \dd_0(1+\tau)^{-2}]^{m-1} - 1 \Big\}  \\
& > \frac{h}{m-1}\Big\{ \dd(\tau) + 1 - 2(m-1)\dd_0(1+\tau)^{-2} - 1 \Big\} = 0,
\end{aligned}
\]
for all $\tau > 0$, that is, $h$ is a (strict) supersolution for $f$. We deduce $f(\tau) \leq h(\tau)$ for all $\tau \geq 0$ by comparison, that is \eqref{eq:BoundOnfSub}.
\end{proof}
\begin{rem} We remark that the control over the growth of $f$ in \eqref{eq:BoundOnfSub} is crucial in order to apply the comparison principle: to the one hand, we know that the solution $v$ to \eqref{eq:InDataTrasProb}--\eqref{eq:REACTIONTRANSFORMATION} grows exponentially fast in compact sets of $\Omega$ (cf. Corollary \ref{cor:speed of convergence}) while, on the other, the subsolution $w$ in \eqref{eq:ApproxTW} approaches the Friendly Giant $\Phi = \Phi(z)$ much slower. This will allow us to compare $w$ and $v$ on the ``vertical'' part of the boundary $D \times \{y=0\} \times (0,\infty)$.
\end{rem}
\begin{lem}\label{lem:CrucialLimitSubsol}
Let $m > 1$, $\dd_0 \in (0,1)$, $f_0 \in (0,1)$, $g_0 \in \RR$, and let $f,g$ as in \eqref{eq:SubSolSystem}. Then
\begin{equation}\label{eq:CrucialLimitSub}
\lim_{\tau\to+\infty} g(\tau) - c_\ast\tau = g_0 - 2c_\ast(m-1)\dd_0 + c_\ast(m-1) \ln f_0.
\end{equation}
\end{lem}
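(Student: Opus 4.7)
The plan is to exploit the exact relation $g'(\tau) = c_\ast f^{m-1}(\tau)$ together with the ODE for $f$ to rewrite $g(\tau)-c_\ast\tau$ as an integral that can be evaluated in closed form up to an $o(1)$ term.

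First I will note that integrating the second equation in \eqref{eq:SubSolSystem} gives
\[
g(\tau)-c_\ast\tau = g_0 + c_\ast\int_0^\tau \bigl(f^{m-1}(s)-1\bigr)\,\rd s,
\]
so the task reduces to computing the limit of this integral. Next, from the equation for $f$,
\[
(m-1)\frac{f'(s)}{f(s)} = 1 - \delta(s) - f^{m-1}(s),
\]
hence
\[
f^{m-1}(s)-1 = -\delta(s) - (m-1)\frac{f'(s)}{f(s)}.
\]
Integrating from $0$ to $\tau$ gives the key identity
\[
\int_0^\tau \bigl(f^{m-1}(s)-1\bigr)\,\rd s = -\int_0^\tau \delta(s)\,\rd s - (m-1)\ln\!\frac{f(\tau)}{f_0}.
\]

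Now I take the limit as $\tau\to+\infty$ on the right-hand side. By the explicit form of $\delta$ in \eqref{eq:Defdelta},
\[
\int_0^\infty \delta(s)\,\rd s = 2(m-1)\delta_0\int_0^\infty\frac{\rd s}{(1+s)^2} = 2(m-1)\delta_0,
\]
so the first term converges to $-2(m-1)\delta_0$. The second term is handled by Lemma \ref{lem:PropfSubsol}: since $f(\tau)\to 1$, we have $(m-1)\ln(f(\tau)/f_0)\to -(m-1)\ln f_0$. Combining these two convergences with the initial identity yields
\[
\lim_{\tau\to+\infty}\bigl(g(\tau)-c_\ast\tau\bigr) = g_0 + c_\ast\bigl(-2(m-1)\delta_0 + (m-1)\ln f_0\bigr),
\]
which is exactly \eqref{eq:CrucialLimitSub}.

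There is no real obstacle here: the argument is a direct integration trick using that $f'/f$ is an exact derivative. The only point worth verifying carefully is that $f$ stays bounded away from $0$ on $[0,\infty)$ so that $\ln f$ makes sense and the limit $\ln f(\tau)\to 0$ holds; this is guaranteed by Lemma \ref{lem:PropfSubsol} (which gives $f(\tau)\in(0,1)$ and $f(\tau)\to 1$), together with the strict positivity of $f$ on compact intervals coming from the ODE comparison with the constant subsolution $0$.
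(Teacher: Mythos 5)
Your argument is correct and is essentially identical to the paper's: both combine the integrated logarithmic form of the $f$-equation with $g'=c_\ast f^{m-1}$, evaluate $\int_0^\infty\delta = 2(m-1)\delta_0$ explicitly, and use $f(\tau)\to 1$ from Lemma \ref{lem:PropfSubsol}. The only difference is cosmetic (you integrate the $g$-equation first and substitute the $f$-equation, the paper does the reverse).
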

\begin{proof} Combining the equation of $g$ with \eqref{eq:ExLimfSub}, we easily see that
\[
\begin{aligned}
c_\ast(m-1) \ln (f(\tau)/f_0) &= c_\ast\tau - \int_0^\tau g'(s) \, \rd s - c_\ast \int_0^\tau \dd(s) \, \rd s\\
&= c_\ast\tau - g(\tau) + g_0 - 2c_\ast(m-1)\dd_0 \frac{\tau}{1+\tau},
\end{aligned}
\]
for all $\tau > 0$. Passing to the limit as $\tau \to +\infty$ and using \eqref{eq:LiminftyfSub}, \eqref{eq:CrucialLimitSub} follows.
\end{proof}
\begin{lem}\label{lem:BoundBelowSubsol}
Let $t_0 > 0$, $\tau_0 := \ln t_0$ and let $v$ be the weak solution to \eqref{eq:InDataTrasProb}--\eqref{eq:REACTIONTRANSFORMATION}. Then for every $f_0 \in(0,1)$ and $g_0 \in \RR$, there exist $\dd_0 \in (0,1)$ and $T > 0$ such that if $f$ and $g$ satisfy \eqref{eq:SubSolSystem}, then
\begin{equation}\label{eq:BoundBelowSubSol}
v(z,y,\tau + \tau_0 + T) \geq f(\tau)\varphi(z,y-g(\tau)) \quad \text{ in } \Omega\times(0,\infty).
\end{equation}
\end{lem}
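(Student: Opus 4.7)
The plan is to show $w(z,y,\tau):=f(\tau)\varphi(z,y-g(\tau))$ is a classical subsolution of the equation in \eqref{eq:REACTIONTRANSFORMATION}, and then to trap $v(\cdot,\cdot,\cdot+\tau_0+T)$ from below by $w$ via the parabolic comparison principle on a suitable cylinder (the right half of the tube, with the left half handled analogously through the reflected wave of Remark~\ref{rem:ReflectedBarriers}). The main quantitative input is Corollary~\ref{cor:speed of convergence}, which delivers the exponential closeness of $v$ to $\Phi$ on compact cross-sections.

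First I would verify that $w$ is a subsolution by plugging the ansatz into identity~\eqref{eq:SignOperBarriers}. The bracket multiplying $\partial_\xi\varphi$ is $c_\ast f^{m-1}(\tau)-g'(\tau)$, which vanishes by the second ODE in \eqref{eq:SubSolSystem}. The bracket multiplying $\varphi$ is $f'(\tau)+\tfrac{f^m(\tau)-f(\tau)}{m-1}$; using the first ODE in \eqref{eq:SubSolSystem} it reduces to $-\tfrac{f(\tau)\delta(\tau)}{m-1}$. Since $\varphi,f,\delta\geq 0$, we obtain
\[
\partial_\tau w-\Delta w^m-\tfrac{w}{m-1}=-\tfrac{f(\tau)\delta(\tau)}{m-1}\,\varphi\leq 0,
\]
so $w$ is a classical subsolution.

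I would then apply the parabolic comparison principle on the cylinder $D\times[0,\infty)\times(0,\infty)$. On the lateral part $\partial D\times[0,\infty)$ both $v$ and $w$ vanish. For the initial ordering at $\tau=0$, since $\varphi(z,y-g_0)$ is supported in $\{y\leq g_0+\xi_0\}$ and $\varphi\leq\Phi$, it suffices to ensure $v(z,y,\tau_0+T)\geq f_0\Phi(z)$ on the compact set $D\times[0,\max\{g_0+\xi_0,0\}]$; an adaptation of Corollary~\ref{cor:speed of convergence} to a symmetric strip of width $\geq g_0+\xi_0$ (the barrier argument behind it works on any such strip) gives
\[
v(z,y,\tau)\geq \bigl(1-\delta_1 e^{-\lambda(\tau-T_0)}\bigr)\Phi(z)\quad \text{on the strip, for }\tau\geq 2T_0,
\]
so that $T\geq 2T_0$ with $\delta_1 e^{-\lambda(T-T_0)}\leq 1-f_0$ secures the initial ordering. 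The crucial comparison on the ``vertical'' cross-section $D\times\{y=0\}\times[0,\infty)$ is then forced by combining \eqref{eq:BoundOnfSub} with Corollary~\ref{cor:speed of convergence}: since $\varphi(z,-g(\tau))\leq\Phi(z)$ and $f(\tau)\leq 1-\tfrac{\delta_0}{(1+\tau)^2}$, the inequality $v(z,0,\tau+\tau_0+T)\geq f(\tau)\Phi(z)\geq f(\tau)\varphi(z,-g(\tau))$ is preserved for all $\tau\geq 0$ provided $T$ is chosen so that $\delta_1 e^{-\lambda(\tau+T-T_0)}\leq \tfrac{\delta_0}{(1+\tau)^2}$, which holds uniformly in $\tau$ since exponential decay beats any polynomial.

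The main obstacle is precisely this rate-matching on the vertical cross-section: the exponential convergence of $v$ to $\Phi$ must outpace the approach of $f(\tau)$ to $1$, which is only polynomial. This is exactly why the corrector $\delta(\tau)=2(m-1)\delta_0/(1+\tau)^2$ is introduced in \eqref{eq:Defdelta}: the bound \eqref{eq:BoundOnfSub} guarantees a strictly positive polynomial margin between $f(\tau)$ and $1$ which can be swallowed by the exponential margin of $v$ once $T=T(f_0,g_0)$ is large enough. Without such a margin the comparison at $y=0$ would collapse, and $w$ would overshoot $v$ before the right-moving wave has had time to arrive.
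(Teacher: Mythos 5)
Your proposal is correct and follows essentially the same route as the paper: verify via \eqref{eq:SignOperBarriers} that the bracket terms reduce to $-\tfrac{f\delta}{m-1}\varphi\le 0$, order the initial data using the bounded right support of $\varphi$ together with convergence in relative error on a finite cylinder, and force the ordering on the cross-section $\{y=0\}$ by playing the exponential rate of Corollary~\ref{cor:speed of convergence} against the polynomial margin $\delta_0/(1+\tau)^2$ from \eqref{eq:BoundOnfSub}. The only cosmetic difference is that the paper invokes Proposition~\ref{LEMMACONVERGENCEINBOUDEDDOMAINS} (no rate needed) for the initial-time ordering, where you adapt the corollary to a strip; both work.
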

\begin{proof} Let us fix $f_0 \in (0,1)$ and $g_0 \geq 0$ and choose $\delta_0 \in (0,1)$ such that
\[
\dd_0 < \min\big\{\overline{\dd},\tfrac{1}{2c_\ast(m-1)}\big\},
\]
where $\overline{\dd} \in (0,1)$ is as in Lemma \ref{lem:PropfSubsol} and set
\[
\underline{w}(z,y,\tau) := f(\tau) \varphi(z,y-g(\tau)).
\]
Using that $\varphi$ has bounded support to the right and that $f_0 \in (0,1)$, we may apply Proposition \ref{LEMMACONVERGENCEINBOUDEDDOMAINS} to deduce the existence of $T > 0$ such that
\[
\underline{w}(z,y,0) = f_0 \varphi(z,y-g_0) \leq v(z,y,\tau_0 + T) \quad \text{ in } D \times\RR_+.
\]
Furthermore, by \eqref{eq:BoundOnfSub} and Corollary \ref{cor:speed of convergence}, it follows
\[
\underline{w}(z,0,\tau) = f(\tau) \varphi(z,-g(\tau)) \leq \Big[ 1 - \frac{\dd(\tau)}{2(m-1)} \Big] \Phi(z) \leq v(z,0,\tau_0 + \tau + T) \quad \text{ in } D,
\]
for all $\tau \geq 0$, taking eventually $T > 0$ larger. Since $\underline{w} = v$ in $\partial \Omega \times (0,\infty)$ and $\underline{w}$ is a subsolution by construction (it is enough to combine \eqref{eq:SignOperBarriers} and \eqref{eq:SubSolSystem}), the inequality in \eqref{eq:BoundBelowSubSol} follows by the comparison principle.
\end{proof}
%
%
%
%
%
%
%
%
\subsection*{Construction of a supersolution.} To build a supersolution, we follow the argument above by considering solutions to the following Cauchy problems
\begin{equation}\label{eq:SuperSolSystem}
\begin{cases}
\bar{f}' = \tfrac{\bar{f}}{m-1}(1 - \bar{f}^{m-1}), \ &\tau >0, \\
\bar{f}(0) = \bar{f}_0
\end{cases}
\qquad\qquad
\begin{cases}
\bar{g}' = c_\ast \bar{f}^{m-1}, \  &\tau >0, \\
\bar{g}(0) = \bar{g}_0,
\end{cases}
\end{equation}
for some suitable choice of $\bar{f}_0 \in (0,1)$ and $\bar{g}_0 \in \RR$.

\begin{lem}\label{lem:PropfSupersol}
Let $m > 1$, $\bar{f}_0 \in (0,1)$, $\bar{g}_0 \in \RR$ and let $\bar{f}$ and $\bar{g}$ satisfying \eqref{eq:SuperSolSystem}. Then, $\bar{f}$ is increasing and
\begin{equation}\label{eq:LiminftyfSub1}
\lim_{\tau\to+\infty} \bar{f}(\tau) = 1.
\end{equation}
Further,
\begin{equation}\label{eq:CrucialLimitSuper}
\lim_{\tau\to+\infty} \bar{g}(\tau) - c_\ast\tau = \bar{g}_0 + c_\ast(m-1) \ln \bar{f}_0.
\end{equation}
\end{lem}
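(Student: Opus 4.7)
The plan is to mirror the arguments used for the subsolution ODE system in Lemmas \ref{lem:PropfSubsol} and \ref{lem:CrucialLimitSubsol}, which here become simpler because the perturbation term $\delta(\tau)$ is absent. In fact, the $\bar f$ equation is the autonomous logistic-type ODE obtained by setting $\delta_0 = 0$ in \eqref{eq:SubSolSystem}, so many steps collapse into elementary arguments.

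First I would establish that $\bar f(\tau) \in (0,1)$ for every $\tau > 0$. Since $\bar f_0 \in (0,1)$, the constants $0$ and $1$ are a subsolution and supersolution of the ODE, so the comparison principle for scalar first-order ODEs traps $\bar f$ between them. Once this is known, the right-hand side $\tfrac{\bar f}{m-1}(1-\bar f^{m-1})$ is strictly positive, which immediately gives $\bar f'>0$, i.e.\ $\bar f$ is strictly increasing. Being monotone and bounded above, $\bar f$ admits a limit $\ell\in(\bar f_0,1]$; passing to the limit in the equation forces $\ell(1-\ell^{m-1}) = 0$, and since $\ell \geq \bar f_0 > 0$, we conclude $\ell = 1$, proving \eqref{eq:LiminftyfSub1}.

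For \eqref{eq:CrucialLimitSuper} I would repeat the integral identity used in the proof of Lemma \ref{lem:CrucialLimitSubsol}, but now without the $\delta$ correction. Dividing the first ODE in \eqref{eq:SuperSolSystem} by $\bar f$ and integrating yields
\begin{equation*}
(m-1)\ln\bigl(\bar f(\tau)/\bar f_0\bigr) = \int_0^\tau \bigl(1 - \bar f^{m-1}(s)\bigr)\ds = \tau - \frac{1}{c_\ast}\int_0^\tau \bar g'(s)\ds = \tau - \frac{\bar g(\tau)-\bar g_0}{c_\ast},
\end{equation*}
where we have used the second equation in \eqref{eq:SuperSolSystem}. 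Rearranging gives
\begin{equation*}
\bar g(\tau) - c_\ast\tau = \bar g_0 - c_\ast(m-1)\ln\bigl(\bar f(\tau)/\bar f_0\bigr),
\end{equation*}
and passing to the limit $\tau \to +\infty$ using \eqref{eq:LiminftyfSub1} delivers exactly \eqref{eq:CrucialLimitSuper}.

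There is no real obstacle: the argument is a direct ODE computation, and the absence of the perturbation $\delta(\tau)$ removes precisely the technical step (the supersolution bound \eqref{eq:BoundOnfSub}) that made the subsolution case delicate. The only point requiring a moment of care is justifying that the limit of $\bar f$ is $1$ rather than an interior equilibrium, which is handled by monotonicity together with the positivity of the right-hand side away from $\bar f = 1$.
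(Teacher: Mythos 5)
Your proof is correct and follows essentially the same route as the paper, which itself only remarks that the argument is ``similar to the one of Lemma \ref{lem:PropfSubsol}'': trap $\bar f$ between the equilibria $0$ and $1$ by ODE comparison, deduce monotonicity and the limit $\bar f\to 1$, and then integrate $\bar f'/\bar f$ and use $\bar g'=c_\ast\bar f^{m-1}$ to get the shift. Your observation that the absence of $\delta(\tau)$ removes the need for the bound \eqref{eq:BoundOnfSub} is exactly the simplification the paper has in mind.
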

\begin{proof} The proof is similar to the one of Lemma \ref{lem:PropfSubsol}.
\end{proof}

\begin{lem}\label{lem:BoundAboveSupersol}
Let $t_0 > 0$, $\tau_0 := \ln t_0$ and let $v$ be the weak solution to \eqref{eq:InDataTrasProb}--\eqref{eq:REACTIONTRANSFORMATION}. Then, for every $\bar{f}_0 \in (0,1)$, there exist $\bar{g}_0 \in \RR$ and $t_0 > 0$ such that if $\bar{f}$ and $\bar{g}$ satisfy \eqref{eq:SubSolSystem}, then
\begin{equation}\label{eq:BoundAboveSuperSol}
v(z,y,\tau + \tau_0) \leq \bar{f}(\tau)\varphi(z,y-\bar{g}(\tau)) \quad \text{ in } \Omega\times(0,\infty),
\end{equation}
\end{lem}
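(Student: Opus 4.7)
The plan is to define
\[
\bar{w}(z,y,\tau) := \bar{f}(\tau)\,\varphi\bigl(z,\,y - \bar{g}(\tau)\bigr),
\]
show it is a supersolution of the equation in \eqref{eq:REACTIONTRANSFORMATION} that matches the lateral boundary condition, arrange the initial ordering $\bar{w}(\cdot,\cdot,0) \geq v_0$ by a judicious choice of $\bar{g}_0$ and $t_0$, and conclude via the parabolic comparison principle for the PME. The construction mirrors that of Lemma \ref{lem:BoundBelowSubsol}, but is cleaner because the system \eqref{eq:SuperSolSystem} has no damping term $\delta(\tau)$: plugging $\bar{f},\bar{g}$ into identity \eqref{eq:SignOperBarriers}, the relation $\bar{g}' = c_\ast \bar{f}^{m-1}$ annihilates the coefficient of $\partial_\xi\varphi$, while $\bar{f}' = \tfrac{\bar{f}}{m-1}(1-\bar{f}^{m-1})$ annihilates the coefficient of $\varphi$, so $\bar{w}$ solves the equation with equality. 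The boundary condition $\bar{w}=0$ on $\partial D \times \RR \times (0,\infty)$ is inherited from $\varphi$.

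The heart of the argument is the initial comparison $\bar{f}_0\,\varphi(z, y-\bar{g}_0) \geq t_0^{1/(m-1)} u_0(z,y)$ on $\Omega$. By Step 1 of the proof of Proposition \ref{LEMMACONVERGENCEINBOUDEDDOMAINS} (see also the subsequent remark), there is $\overline{t}>0$, depending only on $m$, $D$ and $u_0$, such that $u_0 \leq \overline{t}^{-1/(m-1)}\Phi$ on $\overline{\Omega}$; choosing
\[
t_0 \leq \overline{t}\,(\bar{f}_0/2)^{m-1}
\]
then forces $v_0 \leq (\bar{f}_0/2)\Phi$ on $\Omega$. By the second property in \eqref{eq:Propphi}, we pick $K = K(m,D,\bar{f}_0) > 0$ with $\varphi(z,\xi) \geq \tfrac{1}{2}\Phi(z)$ for every $z\in D$ and $\xi \leq -K$. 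Letting $Y>0$ bound the $y$-support of $u_0$ and taking $\bar{g}_0 \geq Y+K$, the initial comparison splits into three regions: for $|y|\leq Y$ we have $y - \bar{g}_0 \leq -K$, hence $\bar{w}(z,y,0) \geq \bar{f}_0\,\tfrac{1}{2}\Phi(z) \geq v_0$; for $y > Y$ or $y < -Y$, $v_0\equiv 0$ while $\bar{w}\geq 0$.

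With $v$ and $\bar{w}$ ordered on the parabolic boundary of $\Omega\times(0,\infty)$, the PME comparison principle gives $v(\cdot,\cdot,\tau+\tau_0)\leq \bar{w}(\cdot,\cdot,\tau)$ in $\Omega\times(0,\infty)$, which is precisely \eqref{eq:BoundAboveSuperSol}. The main subtle point is this initial ordering: the compact support of $u_0$ is essential in the rightmost region, where $\bar{w}$ may be arbitrarily small (since $\varphi$ vanishes for $\xi \geq \xi_0$) while $v_0$ is already zero. In contrast with Lemma \ref{lem:BoundBelowSubsol}, no exponential lower bound from Corollary \ref{cor:speed of convergence} is invoked, because $\bar{w}$ plays the role of an upper envelope and $\bar{f}\leq 1$ throughout.
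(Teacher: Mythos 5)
Your proposal is correct and follows essentially the same route as the paper: the same barrier $\bar{f}(\tau)\varphi(z,y-\bar{g}(\tau))$, which by \eqref{eq:SignOperBarriers} and \eqref{eq:SuperSolSystem} is an exact solution, an initial ordering obtained by shifting ($\bar{g}_0$ large) and taking $t_0$ small, and the comparison principle on $\Omega$. Your fleshing-out of the paper's terse ``take $t_0$ small enough'' — via $u_0\le \overline{t}^{-1/(m-1)}\Phi$ from Step 1 of Proposition \ref{LEMMACONVERGENCEINBOUDEDDOMAINS} together with $\varphi\ge\tfrac12\Phi$ for $\xi\le -K$ — is exactly the mechanism the authors themselves use in Step 4 of Section \ref{Section:ProofMainTheorem1}, so it is a faithful (and slightly more detailed) rendition of the intended argument.
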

\begin{proof} The proof follows the same idea of Lemma \ref{lem:BoundBelowSubsol}. Given $\bar{f}_0 \in (0,1)$ and setting
\[
\overline{w}(z,y,\tau) := \bar{f}(\tau) \varphi(z,y- \bar{g}(\tau)),
\]
we choose $\bar{g}_0 \in \RR$ such that $\supp v_0 \subset\subset \supp \overline{w}(\cdot,0)$. Then, we take $t_0 > 0$ small enough depending on $\bar{f}_0$ such that
\[
v_0(z,y) = t_0^{\frac{1}{m-1}} u_0(z,y) \leq \overline{w}(z,y,0) \quad \text{ in } \Omega.
\]
Recalling that $v = \overline{w} = 0$ on $\partial \Omega$ and noticing that $\overline{w}$ is a supersolution by construction, \eqref{eq:BoundAboveSuperSol} follows by comparison.
\end{proof}

We are ready to complete the proof of Theorem \ref{thm:LongTimeBeh}.

\begin{proof}[Proof of Theorem \ref{thm:LongTimeBeh}] Let us fix $\ve \in (0,1)$. By Lemma \ref{lem:BoundBelowSubsol} and Lemma \ref{lem:BoundAboveSupersol}, there are $\tau_0 < \tau_1$ such that
\begin{equation}\label{eq:DoubleBoundFinal}
f(\tau-\tau_0)\varphi(z,y-g(\tau-\tau_0)) \leq v(z,y,\tau) \leq \overline{f}(\tau - \tau_1)\varphi(z,y-\overline{g}(\tau - \tau_1)) \quad \text{ in } \Omega\times(0,\infty).
\end{equation}
Now, fix $c < c_\ast$ and assume $0 \leq y \leq c\tau$, $\tau > 0$. By \eqref{eq:LiminftyfSub}, \eqref{eq:CrucialLimitSub} and the first two relations in \eqref{eq:Propphi}, we have
\[
\begin{aligned}
v(z,y,\tau) &\geq f(\tau-\tau_0)\varphi(z,y-g(\tau-\tau_0)) \geq f(\tau-\tau_0)\varphi(z,c\tau - g(\tau-\tau_0)) \\
&\geq (1-\vep)^{1/2} \varphi(z,c\tau - g(\tau-\tau_0)) \geq (1-\ve)\Phi(z) \quad \text{ in } D\times[0,c\tau]
\end{aligned}
\]
for large $\tau$'s. Recalling that $\Phi$ is an upper-bound for $v$ and repeating the same argument when $-c\tau \leq y \leq 0$ with the reflected wave solutions (cf. Remark \ref{rem:ReflectedBarriers}), the limit in \eqref{eq:LTAInnerBeh} follows.

Now, fix $c > c_\ast$ and assume $y \geq c\tau$, $\tau > 0$. Then by \eqref{eq:LiminftyfSub1}, \eqref{eq:CrucialLimitSuper} and \eqref{eq:Propphi}, we obtain
\[
\begin{aligned}
v(z,y,\tau) &\leq \overline{f}(\tau-\tau_1)\varphi(z,y-\overline{g}(\tau-\tau_1)) \leq \varphi(z, c\tau - \overline{g}(\tau-\tau_0)) = 0  \quad \text{ in } D\times[c\tau,\infty),
\end{aligned}
\]
for large $\tau$'s. Exactly as above, we may repeat the same argument in the second half of the tube by Remark \ref{rem:ReflectedBarriers}, and the proof of \eqref{eq:LTAOuterBeh} is completed.

Finally, \eqref{eq:FBLongTimeWeak} follows by the arbitrariness of $c$ in \eqref{eq:LTAInnerBeh} and \eqref{eq:LTAOuterBeh}, whilst \eqref{eq:FBLongTimeStrong} by combining \eqref{eq:DoubleBoundFinal} with \eqref{eq:CrucialLimitSub} and \eqref{eq:CrucialLimitSuper}.
\end{proof}
%
%
%
%
%
%
%
%
%
%

%
%
%
%
%

%
%
%

\vskip .5cm


\noindent \textbf{\large \sc Acknowledgments.} A. Audrito has received funding from the European Union's Horizon 2020 research and innovation programme under the Marie Sk{\l}odowska--Curie grant agreement 892017 (LNLFB-Problems).

\noindent A. G\'arriz received finantial support from the ANR project DEEV ANR-20-CE40-0011-01.

\noindent F. Quir\'os was funded by MCIN/AEI (Spain) through projects MTM2017-87596-P, PID2020-116949GB-I00, RED2018-102650-T and the ICMAT-Severo Ochoa grant CEX2019-000904-S.

\



%
%
%

%
%
%

%

%
\end{document}